
 \documentclass[11pt, reqno]{amsart}
\usepackage{amsmath,amsthm,amssymb, graphicx}
\theoremstyle{plain}
\newtheorem{thm}{Theorem}[section]
\newtheorem{prop}[thm]{Proposition}
\newtheorem{lem}[thm]{Lemma}
\newtheorem{cor}[thm]{Corollary}
\newtheorem{conj}[thm]{Conjecture}

\theoremstyle{definition}

\newtheorem{rem}[thm]{Remark}
\newtheorem{defn}[thm]{Definition}
\newtheorem{eg}[thm]{Example}
\newtheorem{subtitle}[thm]{}
\newtheorem{ex}{Exercise}[section]
\numberwithin{equation}{section}

\def\d{\delta}
\def\D{\triangle}
\def\e{\epsilon}

\def\K{\nabla}

\def\n{\vert\/}
\def\o{\theta}

\def\cs{{\mathcal{S}}}

\def\li{\langle}
\def\ri{\rangle}
\def\n{|\/ }

\def\bs{\bigskip}
\def\ms{\medskip}

\def\ni{\noindent}
\def\ti{\tilde}
\def\p{\partial}

\def\I{{\rm I\/}}
\def\II{{\rm II\/}}
\def\diag{{\rm diag}}

\def\C{\mathbb{C}}

\def\R{\mathbb{R} }

\newcommand{\beq}{\begin{equation}}
\newcommand{\eeq}{\end{equation}}
\newcommand{\beg}{\begin{eg}}
\newcommand{\eeg}{\end{eg}}
\newcommand{\bthm}{\begin{thm}}
\newcommand{\ethm}{\end{thm}}
\newcommand{\bprop}{\begin{prop}}
\newcommand{\eprop}{\end{prop}}
\newcommand{\bcor}{\begin{cor}}
\newcommand{\ecor}{\end{cor}}
\newcommand{\blem}{\begin{lem}}
\newcommand{\elem}{\end{lem}}
\newcommand{\bca}{\begin{cases}}
\newcommand{\eca}{\end{cases}}
\newcommand{\brem}{\begin{rem}}
\newcommand{\erem}{\end{rem}}
\newcommand{\bpm}{\begin{pmatrix}}
\newcommand{\epm}{\end{pmatrix}}
\newcommand{\bbm}{\begin{bmatrix}}
\newcommand{\ebm}{\end{bmatrix}}
\newcommand{\bvm}{\begin{vmatrix}}
\newcommand{\evm}{\end{vmatrix}}
\newcommand{\bdefn}{\begin{defn}}
\newcommand{\edefn}{\end{defn}}
\newcommand{\bsub}{\begin{subtitle}}
\newcommand{\esub}{\end{subtitle}}
\newcommand{\bex}{\begin{ex}}
\newcommand{\eex}{\end{ex}}
\newcommand{\ben}{\begin{enumerate}}
\newcommand{\een}{\end{enumerate}}


\newcommand{\balign}{\begin{align}}
\newcommand{\ealign}{\end{align}}
\newcommand{\baligns}{\begin{align*}}
\newcommand{\ealigns}{\end{align*}}

\def\min{{\rm min\/}}
\def\bc{{\bf c}}

\newcommand{\vn}{{\bf n}}
\newcommand{\Id}{{\rm Id\/}}

\begin{document}

\title[Mean curvature flow]
{Ancient solutions to mean curvature flow for isoparametric submanifolds}
\author{Xiaobo Liu$^\ast$}\thanks{$^\ast$Research was partially supported by NSFC grants 11431001 and 11890662}
\address{Beijing International Center for Mathematical Researcch  \& School of Mathematical Sciences
\\ Peking University, Beijing, China}
\email{xbliu@math.pku.edu.cn}
\author{Chuu-Lian Terng}
\address{Department of Mathematics\\
University of California at Irvine, Irvine, CA 92697-3875}
\email{cterng@math.uci.edu}

\begin{abstract}

Mean curvature flow for isoparametric submanifolds in Euclidean spaces and spheres was studied in \cite{LT}.
In this paper, we will show that all these solutions are ancient solutions. We also discuss rigidity of ancient mean curvature flows for  hypersurfaces in spheres and its relation to the Chern's conjecture on the norm of the second fundamental forms of minimal hypersurfaces in spheres.
\end{abstract}

\maketitle

\section{ Introduction}
\ms

The mean curvature flow (abbreviated as {\it MCF}) of a submanifold $M$ in a Riemannian manifold $X$
over an interval $I$ is a map
$ f: I \times M \longrightarrow X $ satisfying
$$\frac{\p f}{\p t} = H(t, \cdot),$$
where $H(t, \cdot)$ is the mean curvature vector field  of $f(t, \cdot)$.
If a solution to this equation exists for all $t \in (-\infty, T)$ for some $T \geq 0$, then it is called an
{\it ancient solution}. Ancient solutions are important in studying singularities of MCF.
A simple example of ancient solution to MCF is the shrinking sphere in a Euclidean space.
A set of conditions which ensure a compact ancient solution to be the shrinking sphere is given in \cite{HS}.
Other examples of compact convex ancient solutions for MCF of nonconvex hypersurfaces in Euclidean spaces can be found in
\cite{A}, \cite{BLT}, \cite{HH}, \cite{W}, \cite{Wh}, etc.
Recently an ancient solution of MCF of hypersurfaces with the topology of $S^1 \times S^{n-1}$ in $\mathbb{R}^{n+1}$
was given in \cite{BLM}.
A construction of higher codimensional curve
shortening flows was given in \cite{AAAW} and \cite{SS}. It was proved in \cite{QT} that after reparametrization the family of proper Dupin submanifolds in sphere constructed in \cite{PT} is a MCF for submanifolds in spheres.

In this paper, we will give a class of
 ancient solutions to MCF in Euclidean spaces and spheres for compact submanifolds. These examples include both hypersurfaces and higher
 codimensional submanifolds in spheres and have more complicated topological types.

A submanifold $M$ of a space form is {\it isoparametric} if
its normal bundle is flat and principal curvatures along any parallel normal
vector field are constant. The following results were proved in \cite{T}:
\ben
\item[(i)] If $M$ is a compact isoparametric submanifold in $\R^{n+k}$, then $M$ is contained in a hypersphere. \item[(ii)] The set of parallel submanifolds to $M$ forms a singular foliation,
whose top dimensional leaves are also isoparametric and lower dimensional leaves are smooth focal submanifolds of $M$. Focal submanifolds are no longer isoparametric.
\een

A submanifold $M$ in $\R^{n+k}$ is {\it full} if $M$ is not contained in any hyperplane.
The {\it rank} of a full isoparametric submanifold in a Euclidean space is the co-dimension of the submanifold. Compact rank 2 isoparametric submanifolds in Euclidean spaces are isoparametric hypersurfaces in spheres.  These hypersurfaces have rich topology.  For example, principal orbits of isotropy representations of rank 2 symmetric spaces are homogeneous examples of isoparametric hypersurfaces in spheres. For each orthogonal representation of Clifford algebra, Ferus-Karcher-M\"unzner constructed in \cite{FKM} a family of isoparametric hypersurfaces in spheres. Most of these examples are not homogeneous.  Principal orbits of isotropy representations of higher rank  symmetric spaces are isoparametric submanifolds of higher codimension.

In \cite{LT}, we studied MCF with initial data an isoparametric submanifold in both Euclidean spaces and in spheres.
 We call the MCF in spheres (respectively Euclidean spaces) the {\it spherical MCF\/} (respectively {\it Euclidean MCF\/}).
 Let $f(t,x)$ and $F(t,x)$ denote the spherical and Euclidean MCF with initial data the inclusion map
 $ f_0: M \to S^{n+k-1}  $
 of an $n$-dimensional isoparametric submanifold
 $M$ in the unit sphere $S^{n+k-1}\subset \mathbb{R}^{n+k}$.
 The following results were proved in \cite{LT}:
\ben
\item  $f(t, \cdot)$ and $F(t,\cdot)$ are isoparametric and parallel to $M$.
\item If $M$ is not minimal in $S^{n+k-1}$, then the spherical MCF collapses in finite time $T > 0$ to a lower dimensional focal submanifold $N \subset S^{n+k-1}$ and the Euclidean MCF is
\begin{equation} \label{eqn:aa}
F(t,x)= \sqrt{1-2nt}\, f(-\frac{1}{2n} \ln (1-2nt), x).
\end{equation}
In particular, the Euclidean MCF collapses at $T_0= \frac{1-e^{-2nT}}{2n}$ to the focal submanifold $e^{-nT} N$.
Moreover  $T_0 <\frac{1}{2n}$.
\item If $M$ is a minimal isoparametric submanifold of $S^{n+k-1}$, then the spherical MCF $f(t,x)= f_0(x)$ is stationary,  and the Euclidean MCF is
\begin{equation} \label{eqn:aa3}
F(t,x)= \sqrt{1-2nt}\, f_0(x),
\end{equation}
 which homothetically collapses to a point at $T_0= \frac{1}{2n}$.
\een

One of the main results of this paper is to show that the above spherical and Euclidean MCFs are ancient solutions:

\begin{thm}\label{bo} Let $M$ be an isoparametric submanifold in the unit sphere, $f(t,x)$, $F(t, x)$ the spherical and Euclidean
MCF with initial data $M$. Then we have the following:
\ben
\item $f(t,\cdot)$ and $F(t, \cdot)$ exist for all $t\in (-\infty, 0]$.
\item There is a unique minimal isoparametric submanifold $M_\min$ in $S^{n+k-1}$, which is parallel to $M$. In fact, there exists a unit parallel normal vector field $\zeta$ on $M$ in $S^{n+k-1}$ such that  the map $h:M\to S^{n+k-1}$ defined by
$$h(x)= (\cos r) x + (\sin r) \zeta(x)$$
is the embedding of  $M_\min$ in $S^{n+k-1}$, where $r$ is the spherical distance between $M$ and $M_\min$.
\item
\begin{align}
&\lim_{t\to -\infty} ||F(t,x)- \sqrt{1-2nt} \, \, h(x)||=0, \label{ax1}\\
& \lim_{t\to -\infty} ||f(t,x)- h(x)||= 0. \label{ax2}
\end{align}
for all $x \in M$.
 \een
\end{thm}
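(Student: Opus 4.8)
The plan is to reduce the flow to a finite–dimensional gradient flow on the space of parallel submanifolds and then analyze it qualitatively. By the results of \cite{LT} quoted above, both $f(t,\cdot)$ and $F(t,\cdot)$ stay inside the parallel family of $M$, so it suffices to track one normal parameter. Fix $p_0\in M$, let $V=\nu^s_{p_0}M\cong\mathbb R^{k-1}$ be the spherical normal space with a parallel frame, and write a parallel submanifold as $M_\eta=\{\exp^S_p(\eta(p)):p\in M\}$ for $\eta\in V$. Let $a_1,\dots,a_g$ be the curvature normals of $M$ in $\mathbb R^{n+k}$ with multiplicities $m_i$ ($\sum_i m_i=n$); since $M\subset S^{n+k-1}$ one has $\langle a_i,p_0\rangle=-1$. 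A Jacobi–field computation gives $\mathrm{Vol}^S(M_\eta)=\mathrm{Vol}(M)\prod_i g_i(\eta)^{m_i}$ with $g_i(\eta)=\cos|\eta|-\langle n_i,\eta\rangle\frac{\sin|\eta|}{|\eta|}$, where $n_i$ is the orthogonal projection of $a_i$ onto $V$. Writing $\Psi=\log\mathrm{Vol}^S(M_\eta)$, the reduced spherical flow is $\dot\eta=-\operatorname{grad}\Psi$ for a suitable (generally non-Euclidean) metric on $V$; its rest points are exactly the critical points of $\Psi$, independently of the metric, i.e.\ the minimal parallel submanifolds. I would deduce the Euclidean statements from the spherical ones through (\ref{eqn:aa}), using that $t\mapsto s=-\frac1{2n}\ln(1-2nt)$ maps $(-\infty,0]$ onto $(-\infty,0]$.

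For part (2) the key device is the substitution $\omega=\sigma(\eta):=\bigl(\cos|\eta|,\tfrac{\sin|\eta|}{|\eta|}\eta\bigr)\in S^{k-1}\subset\mathbb R\oplus V$, the spherical exponential map at the ``north pole'' $p_0$. Using $\langle a_i,p_0\rangle=-1$ one gets $g_i(\eta)=\langle\sigma(\eta),b_i\rangle$ with $b_i=-a_i$, so $\Psi=G\circ\sigma$ where $G(\omega)=\sum_i m_i\log\langle\omega,b_i\rangle$ is defined on $U=\{\omega:\langle\omega,b_i\rangle>0\ \forall i\}$, an intersection of open hemispheres and hence geodesically convex. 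Along any unit–speed geodesic, $\langle\omega,b_i\rangle$ has the form $R\cos(\theta-\phi)$, so each $\log\langle\omega,b_i\rangle$ has geodesic second derivative $-\sec^2(\theta-\phi)<0$; thus $G$ is \emph{strictly geodesically concave} on $U$. As $G\to-\infty$ on $\partial U$, it has a unique critical point $\omega_{\min}$, a strict maximum, pulling back to a unique critical point $\eta_{\min}$ of $\Psi$. This produces the unique minimal isoparametric submanifold $M_{\min}=M_{\eta_{\min}}$ and the stated embedding $h$, with $\zeta=\eta_{\min}/|\eta_{\min}|$ and $r=|\eta_{\min}|$ the spherical distance.

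For parts (1) and (3) I would run the reduced flow backward. Since $\Psi$ increases along the backward flow while $\Psi\to-\infty$ at the focal walls, every backward trajectory stays in a compact superlevel set bounded away from the boundary; hence $f(s,\cdot)$ exists for all $s\le0$, and by $F=\sqrt{1-2nt}\,f(s,\cdot)$ so does $F(t,\cdot)$, giving (1). Being a bounded gradient trajectory whose only critical point in the region is $\eta_{\min}$, it converges to $\eta_{\min}$, i.e.\ $f(s,\cdot)\to h$ as $s\to-\infty$, which is (\ref{ax2}). For the unnormalized statement (\ref{ax1}) I need the \emph{rate}: the linearization of the reduced flow at $\eta_{\min}$ is minus the Jacobi operator of $M_{\min}$ restricted to parallel normal fields, which equals $n+\widetilde{\mathcal A}$, where $\widetilde{\mathcal A}V=\sum_i m_i\langle\kappa_i,V\rangle^2$ is the Simons term built from the curvature normals $\kappa_i$ of $M_{\min}$. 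Because $M$ is full the $\kappa_i$ span the normal space, so $\widetilde{\mathcal A}>0$ and the smallest eigenvalue $\mu$ of $n+\widetilde{\mathcal A}$ satisfies $\mu>n$. Thus $\|f(s,\cdot)-h\|=O(e^{\mu s})$ as $s\to-\infty$, and since $\sqrt{1-2nt}=e^{-ns}$ we get $\|F(t,\cdot)-\sqrt{1-2nt}\,h\|=e^{-ns}O(e^{\mu s})=O(e^{(\mu-n)s})\to0$.

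The main obstacle is precisely this quantitative point, the strict inequality $\mu>n$: mere convergence $f\to h$ is not enough, since the Euclidean flow is rescaled by $\sqrt{1-2nt}\to\infty$, so (\ref{ax1}) holds only if the spherical flow converges to $M_{\min}$ \emph{strictly faster} than the rescaling grows. Identifying the linearized operator with $n+\widetilde{\mathcal A}$ and using fullness to make the Simons term strictly positive is the crux; I note in passing that $\mu-n$ is governed by $\|II_{M_{\min}}\|^2$, the quantity in Chern's conjecture mentioned in the introduction. The remaining ingredients---the concavity lemma of part (2) and the standard convergence of a gradient flow with a single nondegenerate critical point---are comparatively routine.
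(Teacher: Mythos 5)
Your proposal is correct, and it proves Theorem \ref{bo} by a genuinely different route from the paper's. The paper argues through explicit differential inequalities inherited from \cite{LT}: for \eqref{ax1} it compares the Euclidean solution $x(t)$ with the rescaled minimal one $\tilde x(t)=\sqrt{1-2nt}\,\tilde x_0$ and shows that $D(t)=\|x(t)-\tilde x(t)\|^2$ satisfies $D'(t)\ge \frac{b}{1-2nt}\,D(t)$ with $b>0$ supplied by fullness, whence $D(a)\le D(0)(1-2na)^{-b/(2n)}\to 0$; thus the rescaling difficulty you correctly isolate is absorbed directly into a Euclidean estimate, with no linearization at all. For \eqref{ax2} and the uniqueness in part (2), the paper integrates the backward-exponential estimate $f'(t)\ge 2nf(t)$ for the squared distance between any two spherical solutions (\cite[Eq.\ (5.2)]{LT}), citing \cite{T} for the existence of a minimal parallel submanifold; backward existence in part (1) comes from volume monotonicity plus the polynomial-vector-field form of the reduced ODE, and the spherical statements are then deduced from the Euclidean ones (you run the time change in the opposite direction, which is equally valid since $t\mapsto s$ maps $(-\infty,0]$ onto itself). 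Your argument replaces all of this with the gradient-flow structure: the strict geodesic concavity of $\log\mathrm{Vol}$ on the spherical chamber via the hemisphere substitution is a lemma that does not appear in the paper, and it yields existence and uniqueness of $M_{\min}$ in one stroke, independently of the flow; compact superlevel sets give part (1); and the spectral gap $\mu>n$ gives \eqref{ax1}. In fact your concavity computation already contains the gap: along a unit-speed geodesic through $z$ in direction $v$, the second derivative of $G$ is $-\sum_i m_i\sec^2(\theta-\phi_i)=-n-\sum_i m_i\langle v,\kappa_i\rangle^2$, where $\kappa_i$ are the curvature normals of $M_z$, which is exactly $-(n+\widetilde{\mathcal A})$ evaluated at the critical point. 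Note that both proofs invoke fullness at the same pressure point: it makes your Simons term positive definite exactly as it makes the paper's constant $b$ positive. What the paper's method buys is elementary, fully quantitative decay with no dynamical-systems machinery; what yours buys is structural insight --- uniqueness from concavity rather than from the flow, and the asymptotic rate identified as the spectral gap of the Jacobi operator on parallel normal fields. Two small repairs: $\widetilde{\mathcal A}V$ as you wrote it is a quadratic form, not a vector, so write $\langle\widetilde{\mathcal A}V,V\rangle=\sum_i m_i\langle\kappa_i,V\rangle^2$; and for the nonlinear flow the backward decay should be stated as $O(e^{(\mu-\epsilon)s})$ for each $\epsilon>0$ (a Gronwall argument near the repelling fixed point), which still suffices because the inequality $\mu>n$ is strict.
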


 If $M$ is minimal in the sphere, $h$ is just the identity map. Part (3) of this theorem means that the MCF of $M$ converges to MCF of $M_{\rm min}$ in a suitable sense as $t \rightarrow -\infty$ . In particular, the spherical MCF of $M$
always converges to a minimal isoparametric submanifold as $t \rightarrow -\infty$.

Comparing to higher codimensional cases, MCF of hypersurfaces are expected to be more rigid and have attracted more attention in the literature. In this paper, we will give a simple unified explicit solution for MCF of all isoparametric hypersurfaces in the sphere (see Propositions \ref{lem:MCF2} and  \ref{cor:SMCF2}). We can use such solutions to obtain geometric descriptions of MCF for concrete
examples of isoparametric hypersurfaces (see examples \ref{ex:torus} and \ref{ex:g3n3} for the cases of Clifford tori and flag manifolds).

It is well known (cf. \cite{Mu}) that isoparametric hypersurfaecs in the sphere can have only $g$ distinct principal curvatures
with $g=1,2,3,4,6$. Isoparametric hypersurfaces with $g=1$ are precisely hyperspheres (i.e. codimensional one subspheres). If it is not totally geodesic, the spherical MCF of a hypersphere collapses to a point in finite positive time and tends to a totally geodesic hypersphere as $t\to -\infty$. Huisken and Sinestrari called these ancient flows the {\it shrinking spherical cap} in \cite{HS} and
 proved the following rigidity results (see Theorem 6.1 and Remark 6.2 in \cite{HS}):
 The spherical MCF $M_t$ for hypersurfaces in $S^{n+1}$ with non-minimal $M_0$ must be a shrinking spherical cap if one of the following  conditions is satisfied for
 all $t<0$:
 \ben
\item There exists $C>0$ such that
\beq\label{ah}
0< ||A(t)||^2< C||H(t)||^2.
\eeq
\item
For some constant $B<4n$,
\beq\label{ai}
||A(t)||^2 < e^{-Bt} ||H(t)||^2.
\eeq

\item
\beq\label{at}
||A(t)||^2 -\frac{1}{n-1} ||H(t)||^2 \leq 2.
\eeq

\een
In above conditions, $A(t)$ and $H(t)$ are the shape operator (or equivalently the second fundamental form) and mean curvature vector field for $M_t$ respectively.
Higher codimensional rigidity results modeled on shrinking spherical caps were obtained in \cite{LN},  \cite{RS}, and \cite{LXZ}.

Motivated by above rigidity results as well as results in \cite{AdC}, we give estimates for $\frac{||A(t)||^2}{||H(t)||^2}$ and $||A(t)||^2- \frac{1}{n} ||H(t)||^2$ in section \ref{sec:rank2} for spherical MCF of isoparametric hypersurfaces with $g$ disticnt principal curvatures. For $g=1$, Huisken-Sinestrari's theorem and results in \cite{LXZ} give evidence that these estimates give rigidity of ancient solutions of MCF. This leads us to conjecture that these estimates will give us the rigidity results modeled on spherical MCF for isoparametric hypersurfaces with $g\geq 2$ distinct principal curvatures (see Conjectures \ref{conj:A/H} and \ref{conj:ay}). The rigidity of the stationary case of the spherical MCF for hypersurfaces is related to Chern's conjecture on the norm square of the second fundamental form of minimal hypersurfaces in spheres.
Moreover, we will see that condition \eqref{ai} in Huisken-Sinestrari's theorem above is sharp in Remark \ref{prop:sharp2}.
We will also discuss the sharpness of condition \eqref{at} in Remark \ref{rem:sharp3}.

This paper is organized as follows:  We prove Theorem \ref{bo} in section \ref{sec:pfmain}, and compute the norms of  shape operators and mean curvature vector fields of isoparametric submanifolds in section \ref{sec:norm}. In the last section, we give explicit solutions of the spherical MCF for isoparametric hypersurfaces, compute formulas for the mean curvature vector and the norm of the shape operators, and discuss the rigidity question of these ancient flows and its relation to Chern's conjecture.

The authors would like to thank Carlo Sinestrari for helpful discussions.

\bs
\section{Proof of Theorem \ref{bo}}
\label{sec:pfmain}

Let $M$ be an $n$-dimensional compact full isoparametric submanifold in a Euclidean space $\mathbb{R}^{n+k}$. Without loss of generality,
we may assume $M^n$ is contained in the unit sphere $S^{n+k-1}$ centered at the origin. We write down some results in \cite{T} which
will be needed later:
\ben
\item[(i)] The tangent bundle of $M$ has the decomposition
$$TM=\oplus_{i=1}^g E_{i}$$
where $E_i$ are called {\it curvature distributions}.
The {\it curvature normals} of $M$  are parallel normal vector fields $\vn_{i}$
such that the shape operator $A_{\xi}$ of $M$ as a submanifold of Euclidean space is given by
\begin{equation} \label{eqn:curvnorm}
 A_{\xi} |_{E_{i}} = \li \xi, \vn_{i}\ri  \Id_{E_i}
 \end{equation}
for all normal vector $\xi$.
 The {\it multiplicity} of $\vn_{i}$ is defined to be the dimension of $E_i$ and is denoted by $m_{i}$.
 \item[(ii)] Each $E_i$ is an integrable distribution and its leaves are standard $m_i$ dimensional spheres.
\item[(iii)] Fix $x_0\in M$,  the group generated by hyperplanes $\li \xi, \vn_i\ri =0$ in $\nu_{x_0}M$ for $1\leq i\leq g$ is a Weyl group $W$.
Let $C$ be the open Weyl chamber of $W$ which contains $x_0$. Then $C$ is given by
\begin{equation} \label{eqn:chamber}
 C=\{ x \in \nu_{x_0}M \,\, \mid \,\, <x, \vn_i> \, < \, 0 \,\,\, for \,\,\, all \,\,\, i \}.
\end{equation}
Let $\overline{C}$ be the closure of
$C$ and $k$ the dimension of $\nu_{x_0}M$. $\overline{C}$ is a simplicial cone with vertex at the origin.
\item[(iv)]
Given $\xi\in \bar C$, there exists a unique parallel normal vector field $\ti \xi$ such that $\ti\xi(x_0)= \xi-x_0$ and the submanifold parallel to $M$ defined by $\ti \xi$ is
$$M_\xi= \{x+ \ti\xi(x)\n x\in M\}.$$
 Moreover, we have the following:
\ben
\item If $\xi\in C$, then $M_\xi$ is diffeomorphic to $M$ and is also isoparametric.
\item Let $\partial C$ be the boundary of $C$. If $x\in \partial C$,  then $M_\xi$ is a smooth lower dimensional focal submanifold of $M$.
\item $M_\xi\cap \nu_{x_0}M$ is the $W$-orbit at $\xi$, where $W$ is the Weyl group associated to $M$.
\item $\cup\{M_\xi\n \xi\in \bar C\}$ is a singular foliation of $\R^{n+k}$, called isoparametric foliation of $\R^{n+k}$.
\item $\cup\{M_\xi\n \xi\in \bar C, ||\xi||=1\}$ is a singular foliation of $S^{n+k-1}$, called isoparametric foliation of $S^{n+k-1}$.
\item Given a unit vector  $\xi\in C$ and $\xi\not=x_0$. There exists a unique unit vector $y_0\in \nu_{x_0}M$ such that $(\cos r) x_0+ (\sin r) y_0= \xi$, where $r$ is the distance between $x_0$ and $\xi$ in $S^{n+k-1}$. Let $\eta$ denote the unique parallel unit normal vector field on $M$ satisfying
    $\eta(x_0)= y_0$. Then
$$M_\xi= \{(\cos r) x+ (\sin r) \eta(x)\n x\in M\}.$$
\item If $\xi\in \p C$ and lies in only one reflection hyperplane $\li \xi, \vn_i\ri=0$, then $\dim(M_\xi)= n-m_i$ and the map $h_{\xi}:M\to M_\xi$ defined by
    \[ h_{\xi}(x):= (\cos r) x+ (\sin r) \eta(x)\]
    is a fibration with fibers isometric to a round $m_i$ dimensional sphere (fibers are leaves of $E_i$).
\een
\item[(v)] All curvature normals $\vn_1, \ldots, \vn_g$ at $x_0$ span $\nu_{x_0}M$.
\een

Fix $x_0 \in M$. It was shown in \cite[Theorem 2.2]{LT} that MCF of $M$ is through parallel isoparametric submanifolds and the Euclidean MCF with initial data $M$ becomes the following ODE for $x(t)\in \nu_{x_0}M$:
 \begin{equation} \label{eqn:EMCF}
  x'(t) = - \sum_{i-1}^g \frac{m_i \vn_i}{<x(t), \vn_i>}, \quad x(0)=x_0.
 \end{equation}

As in the proof of \cite[Theorem 2.4]{LT},
there is a map
\[ \begin{array}{cccl} P:  & \overline{C} & \longrightarrow & \mathbb{R}^k \\
                            & x & \longmapsto & (P_1(x), \ldots , P_k(x))
                            \end{array}
\]
where $P_i$ are $W$-invariant polynomials on $\nu_{x_0}M$ and the map $P$ is a
homeomorphism from $\overline{C}$ to $B:= P(C) \subset \mathbb{R}^k$.
Under this homeomorphism, the Euclidean MCF \eqref{eqn:EMCF} becomes a flow equation
along a polynomial vector field on $\mathbb{R}^k$. Solution to the latter equation always exists for all $t$.
In fact such solutions can be recursively constructed as in \cite[Section 3]{LT}. Consequently, the solution
to Euclidean MCF \eqref{eqn:EMCF} exists for $t$ as long as $x(t)$ does not hit $\partial C$.

Since dimensions of leaves through boundary points of $C$ are smaller than $n$, the $n$-dimensional volume of the leaves of the isoparametric foliation are $0$ on
$\partial C$.  Note that the MCF decreases volume. For $t \leq 0$, $x(t)$ will stay away from
$\partial C$.
Consequently, $x(t)$ exists for all $t \in (-\infty, 0]$. It was also proved in \cite{LT} that $x(t)$ exists for $t \in [0, T)$ for
a positive $T < \infty$ and $x(t)$ converges to a point in $\partial C$ as $t \rightarrow T$.
So the maximal time interval for the existence of solutions of MCF for $M$ is $(-\infty, T)$ and this solution is indeed an ancient solution. This proves part (1) of Theorem \ref{bo} for the Euclidean MCF.

Let $\tilde{M}^n \subset S^{n+k-1}$ be a minimal isoparametric submanifold  in the same isoparametric foliation as $M$, and
$\tilde{x}_0$ the unique point in the intersection of $\tilde{M}$ and $C$.
As mentioned in (iv) above that there exists a parallel unit normal vector field $\zeta$ to $M$ in $S^{n+k-1}$ such that
$$\ti M= M_{\ti x_0}= \{(\cos r)\, x+ (\sin r)\, \zeta(x)\n x\in M\}.$$
Here $r$ is the distance between $M$ and $M_{\ti x_0}$ in $S^{n+1}$.
The map
\[ h(x)=(\cos r)\, x+ (\sin r)\, \zeta(x) \]
defines a diffeomorphism $h: M \rightarrow \ti M$.
Later we will show that minimal isoparametric submanifold is unique in each isoparametric foliation of the sphere and hence completes the proof of part (2) of Theorem \ref{bo}.

Let $\tilde{x}(t) \in C$  be the solution to the Euclidean MCF of  $\tilde{M}$ with $\tilde{x}(0) = \tilde{x}_0$.
By equation \eqref{eqn:aa3}, $\tilde{x}(t)$ is given by
\beq\label{ac}
 \tilde{x}(t) = \sqrt{1-2nt} \,\,\, \tilde{x}_0.
 \eeq
In particular $\tilde{x}(t) $ is parallel to $\tilde{x}_0$ for all $t$ and
\[ \|\tilde{x}(t) \|= \sqrt{1-2nt} \,\, . \]

Let $x(t)$ be the solution of the Euclidean MCF \eqref{eqn:EMCF} with initial data $x_0$, and $\ti x(t)$ the solution given by \eqref{ac} (i.e., the Euclidean MCF with initial data a minimal isoparametric submanifold).
To prove \eqref{ax1}, we compute the derivative of
$$D(t):= ||x(t)-\ti x(t)||^2,$$
to get
 \begin{equation}\label{bp}
\frac{1}{2} \frac{d}{dt}\,  D(t)= \sum_{i=1}^g m_i\, \frac{\li x(t)-\tilde{x}(t), \vn_i\ri^2}{\li x(t), \vn_i\ri \li \tilde{x}(t), \vn_i\ri} \geq 0
\end{equation}
(cf. \cite[Equation 2.8]{LT}). Let
\[ \alpha(t) = \frac{x(t)-\tilde{x}(t)}{||x(t)-\tilde{x}(t)||}. \]
Then $\alpha(t)$ is a unit vector in $\nu_{x_0}M$ since both $x(t)$ and $\tilde{x}(t)$ lie in $C$.
By equation \eqref{eqn:aa},
$$\|x(t)\|=\sqrt{1-2nt}.$$
Hence
equation~\eqref{bp} implies
\begin{eqnarray*}
\frac{1}{2} \frac{d}{dt}\,  D(t)
& \geq & D(t) \,\, \sum_{i=1}^g m_i\, \frac{ \li \alpha(t), \vn_i \ri^2}{\| x(t) \| \cdot \|\tilde{x}(t)\| \cdot \|\vn_i\|^2 } \\
&=& \frac{D(t)}{1-2nt} \,\, \sum_{i=1}^g m_i\, \frac{ \li \alpha(t), \vn_i \ri^2}{ \|\vn_i\|^2 }.
\end{eqnarray*}
We claim that the set
\[ \left\{ \left. \sum_{i=1}^g m_i\, \frac{ \li \beta, \vn_i \ri^2}{ \|\vn_i\|^2 } \,\,\, \right| \,\,\,  \beta \in \nu_{x_0}M,
        \,\,\, ||\beta||=1 \right\} \]
is bounded from below by a positive constant. Otherwise, by the compactness of the sphere,
 there would exist a unit vector in $\nu_{x_0}M$ which is perpendicular to all curvature
normals $\vn_i$. This would contradict to the assumption that $M$ is full.

Hence there exists a constant $b>0$ such that
\begin{eqnarray*}
\frac{d}{dt}\,  D(t)
& \geq &  \frac{b}{1-2nt} D(t) .
\end{eqnarray*}
Integrating $\frac{1}{D(t)} \frac{d}{dt}\,  D(t)$ over an interval $[a, 0]$ with $a < 0$, we have
\[ D(a) \leq D(0) (1-2na)^{-\frac{b}{2n}} \]
for all $a < 0$. Hence $D(a) \rightarrow 0$ as $a \rightarrow - \infty$.
This shows that
\begin{equation} \label{eqn:limE}
 \lim_{t \rightarrow - \infty} (x(t)-\tilde{x}(t)) = 0.
\end{equation}
So the Euclidean MCF for $M$ and $\tilde{M}$ are asymptotically the same as $t$ goes to $ - \infty$. This proves equation \eqref{ax1}.

Let $y(t)$ be the solution to the spherical MCF for $M$ with $y(0)=x_0$. By equation \eqref{eqn:aa},  $x(t)$ and $y(t)$ are related by
\begin{equation} \label{eqn:ES}
 x(t) = \sqrt{1-2nt} \,\,\, y\left(-\frac{\ln(1-2nt)}{2n}\right).
\end{equation}
 Since $x(t)$ exists for all $t \in (-\infty, 0]$, so does $y(t)$.
 Hence the spherical MCF for $M$ is also an ancient solution.
 This proves part (1) of Theorem~\ref{bo} for spherical MCF.

Let $M_1$ and $M_2$ be two distinct isoparametric submanifolds  in the same isoparametric foliation in $S^{N-1}$,
 $y_1(t) \in C$ and $y_2(t) \in C$ solutions to spherical MCF of  $M_1$ and $M_2$ respectively.
Define $$ f(t):= ||y_1(t)-y_2(t)||^2. $$
By \cite[Equation (5.2)]{LT},
\[ f'(t) \geq 2n f(t) \]
for all $t$.
Hence \[ \left( \ln f(t) \right)' \geq 2n \]
for all $t$. Integrating both sides over an interval $[a, 0]$ with $a \leq 0$, we obtain
\[ \ln f(0) - \ln f(a) \geq -2na. \]
Therefore we have
\[ 0 \leq f(a) \leq f(0) e^{2na} \]
for all $a \leq 0$.
Consequently
\[ \lim_{a \, \rightarrow \, -\infty} f(a) = 0. \]

If we take $M_2$ to be a minimal isoparametric submanifold in $S^{N-1}$ (which exists by a result in \cite{T})
and $M_1$ an arbitrary isoparametric submanifold in $S^{N-1}$, then $y_2(t)$ is constant and the above arguments imply that
 \[ \lim_{t \, \rightarrow \, -\infty} y_1(t) = y_2(0). \]
 This shows that the spherical MCF of an arbitrary isoparametric submanifold in $S^{N-1}$ converges to a minimal
 isoparametric submanifold as $t \rightarrow -\infty$. This proves equation \eqref{ax2}.

 The above arguments also show that minimal isoparametric submanifold is unique in each isoparametric
family in a sphere. If fact, if both $M_1$ and $M_2$ are minimal, then both $y_1(t)$ and $y_2(t)$ are constant.
The fact that
\[ \lim_{t \, \rightarrow \, -\infty} \| y_1(t) - y_2(t) \|^2 =0 \]
shows that $M_1$ and $M_2$ must be the same submanifold. This proves part (2) of Theorem~\ref{bo}
and also completes the proof of Theorem~\ref{bo}.

\section{Norm of shape operators and mean curvature vectors}
\label{sec:norm}

Squared norms of shape operators and mean curvature vector fields are important geometric quantities in studying
ancient solutions of MCF (see, for example, \cite{HS}, \cite{AB} and \cite{LN}).
In this section we will compute these quantities for MCF of compact isoparametric submanifolds $M^n \subset \mathbb{R}^N$.
Without loss of generalities, we assume $M$ is contained in $S^{N-1}(r)$, the sphere of radius $r$ centered at the
origin in $\mathbb{R}^N$, for some $r>0$.  $S^{N-1}(1)$ will be abbreviated as $S^{N-1}$.
We will use $A^E$ and $H^E$ (respectively $A^S$ and $H^S$) denote the shape operator
and the mean curvature vector of $M$ as a submanifold of the Euclidean space $\mathbb{R}^N$
(respectively of the sphere $S^{N-1}(r)$).
Since MCF of $M$ at each point $x_0 \in M$ stays in the Weyl chamber in the normal space $\nu_{x_0}M $ containing $x_0$,
we will compute norm squares of shape operators and mean curvature vectors as functions on the Weyl chamber.

Fix an arbitrary point $x_0 \in M$ and let $\vn_i$, $i=1, \ldots, g$, be curvature normals of $M$ at the point $x_0$.
The assumption that $M$ is contained in a sphere centered at the origin implies
\[ <x_0, \vn_i>= -1 \]
for all $i$. Let $C$ be the open Weyl chamber in $\nu_{x_0}M$ containing $x_0$. Then $C$ is given by
equation \eqref{eqn:chamber}. For any $x \in C$, the parallel translation of $x-x_0$ defines
a parallel normal vector field of $M$, whose image under the exponential map in $\mathbb{R}^N$ gives
an isoparametric submanifold $M_x$ which is parallel to $M$. Moreover $C \cap M_x = \{x\}$ and
$\nu_x M_x = \nu_{x_0} M$.
Curvature normals of $M_x$ at the point $x$ are given by
\begin{equation} \label{eqn:curvnp}
 - \frac{\vn_i}{<x, \vn_i>}
\end{equation}
with multiplicity $m_i$ which is independent of x for $i=1, \ldots, g$.
Let
\[ \alpha_i := - \frac{\vn_i}{\|\vn_i\|}. \]
Then $\{\alpha_i \mid i=1, \ldots, g\}$ is the set of all {\it positive roots} of the Coxeter group $W$ with respect
to the choice of Weyl chamber $C$ in the sense of \cite{GB}.
Note that, unlike curvature normals, $\alpha_i$ are uniquely determined by the
Weyl chamber $C$ and are independent of the choice of points in $C$.
Therefore it will be convenient to describe all geometric quantities in terms of $\alpha_i$.
For example, $C$ is given by
\begin{equation} \label{eqn:ChamRoots}
C=\{ x \in \nu_{x_0}M \mid \li x, \alpha_i \ri \,\, > \,\,0 \,\,\, for \,\,\, all \,\,\, i \}.
\end{equation}
The Coxeter group $W$ is generated by reflections through hyperplanes perpendicular to $\alpha_i$
for $i=1, \ldots, g$.

\begin{lem} \label{lem:norm}
Let $A^E(x)$ and $H^E(x)$ (respectively $A^S(x)$ and $H^S(x)$) be the shape operator and mean curvature vector of $M_x$ at $x$
when considering $M_x$ as a submanifold of $\mathbb{R}^N$ (respectively of $S^{N-1}(\|x\|)$).
Then
\begin{eqnarray*}
H^E(x) &=& - \sum_{i=1}^g \frac{m_i \alpha_i}{\li x, \alpha_i \ri}, \\
H^S(x) &=& - \sum_{i=1}^g \frac{m_i \alpha_i}{\li x, \alpha_i \ri} + \frac{n x}{\|x\|^2}, \\
\|H^S(x)\|^2 &=& \|H^E(x)\|^2-\frac{n^2}{\|x\|^2},\\
\|A^E(x)\|^2 &=& \sum_{i=1}^g  \frac{m_i}{\li x, \alpha_i \ri^2}, \\
\|A^S(x)\|^2 &=& \sum_{i=1}^g  \frac{m_i}{\li x, \alpha_i \ri^2} - \frac{n}{\|x\|^2}.
\end{eqnarray*}
\end{lem}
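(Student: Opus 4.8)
The plan is to reduce every formula to the spectral description of the shape operator of the parallel submanifold $M_x$ recorded just before the statement, and then to pass between the Euclidean and spherical pictures using the standard second fundamental form of a round sphere. First I would record that the curvature normals of $M_x$ at the point $x$ are $\tilde{\vn}_i = -\vn_i/\langle x, \vn_i\rangle$ with multiplicity $m_i$, so that the Euclidean shape operator acts by $A^E_\xi|_{E_i} = \langle \xi, \tilde{\vn}_i\rangle \Id_{E_i}$. Choosing an orthonormal tangent frame $\{e_a\}_{a=1}^n$ of $T_xM_x$ adapted to the decomposition $T_xM_x = \oplus_i E_i$ simultaneously diagonalizes all $A^E_\xi$, so the Euclidean second fundamental form $\sigma^E$ is diagonal in this frame: writing $i(a)$ for the index with $e_a \in E_{i(a)}$, one gets $\sigma^E(e_a,e_b) = \delta_{ab}\,\tilde{\vn}_{i(a)}$. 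Taking the trace gives $H^E(x) = \sum_i m_i \tilde{\vn}_i$, and substituting $\vn_i = -\|\vn_i\|\,\alpha_i$ together with $\langle x, \vn_i\rangle = -\|\vn_i\|\langle x, \alpha_i\rangle$ yields the first formula; likewise $\|A^E(x)\|^2 = \sum_a \|\sigma^E(e_a,e_a)\|^2 = \sum_i m_i \|\tilde{\vn}_i\|^2 = \sum_i m_i/\langle x, \alpha_i\rangle^2$, which is the fourth formula.

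To obtain the spherical quantities I would use the chain of submanifolds $M_x \subset S^{N-1}(\|x\|) \subset \mathbb{R}^N$, which is legitimate since $x \in \nu_x M_x$ shows the position vector $x$ is normal to $M_x$ at $x$. The second fundamental form of the round sphere $S^{N-1}(\|x\|)$ in $\mathbb{R}^N$ is $\sigma_{\mathrm{sph}}(X,Y) = -\|x\|^{-2}\langle X,Y\rangle\, x$, computed from $\overline\nabla_X x = X$ and $\langle X, x\rangle = 0$ along the sphere. The Gauss decomposition for the nested pair then reads $\sigma^E = \sigma^S + \sigma_{\mathrm{sph}}|_{T_xM_x}$. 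Tracing this over the adapted frame gives $H^E(x) = H^S(x) - \frac{n}{\|x\|^2}\,x$, which rearranges to the second formula, and evaluating on the frame gives $\sigma^S(e_a,e_b) = \delta_{ab}\bigl(\tilde{\vn}_{i(a)} + \|x\|^{-2}x\bigr)$.

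Finally, the two squared-norm identities for the sphere follow by expanding squares and using two inner-product evaluations forced by the normalization $\langle x_0, \vn_i\rangle = -1$. The key observations are $\langle \tilde{\vn}_i, x\rangle = -\langle \vn_i, x\rangle/\langle x, \vn_i\rangle = -1$ for every $i$, and hence $\langle H^E, x\rangle = \sum_i m_i\langle \tilde{\vn}_i, x\rangle = -\sum_i m_i = -n$. With these in hand, $\left\|H^S\right\|^2 = \left\|H^E + \frac{n}{\|x\|^2}x\right\|^2$ expands, using $\langle H^E, x\rangle = -n$, to $\|H^E\|^2 - n^2/\|x\|^2$, giving the third formula; and $\|A^S\|^2 = \sum_i m_i\left\|\tilde{\vn}_i + \frac{x}{\|x\|^2}\right\|^2$ expands termwise, using $\langle \tilde{\vn}_i, x\rangle = -1$ and $\|\tilde{\vn}_i\|^2 = \langle x, \alpha_i\rangle^{-2}$, to $\sum_i m_i\bigl(\langle x, \alpha_i\rangle^{-2} - \|x\|^{-2}\bigr) = \sum_i m_i/\langle x, \alpha_i\rangle^2 - n/\|x\|^2$, giving the fifth formula.

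I do not expect a serious obstacle; the computation is essentially linear algebra on the normal space once the diagonal form of $\sigma^E$ and $\sigma^S$ is established. The only points requiring care are the sign bookkeeping in $\sigma_{\mathrm{sph}}$ and the repeated use of the two identities $\langle x, \tilde{\vn}_i\rangle = -1$ and $\sum_i m_i = n$; the mildest genuine subtlety is justifying that $M_x$ indeed lies in $S^{N-1}(\|x\|)$, which is what makes the nested-submanifold decomposition and hence the passage from $\sigma^E$ to $\sigma^S$ valid, and this is already built into the setup preceding the statement.
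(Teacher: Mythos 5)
Your proposal is correct and takes essentially the same approach as the paper: both read $H^E$ and $\|A^E\|^2$ directly off the curvature normals $-\alpha_i/\langle x,\alpha_i\rangle$ of the parallel submanifold $M_x$ and then pass to the spherical quantities through the radial direction, using $\langle H^E(x),x\rangle=-n$. The only cosmetic difference is bookkeeping: you sum the diagonalized second fundamental form over an adapted tangent frame and invoke the umbilic sphere's Gauss decomposition explicitly, while the paper sums $\|A_{\xi_j}\|^2$ over an orthonormal normal frame containing $x/\|x\|$ and subtracts the radial contribution; these are equivalent computations.
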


\begin{proof} By formula \eqref{eqn:curvnp}, curvature normals of $M_x$ at $x$ is given by
\begin{equation} \label{eqn:curvroot}
 - \frac{\alpha_i}{<x, \alpha_i>}
\end{equation}
for $i=1, \ldots, g$.
Multiplying this vector by $m_i$ and summing over all $i$, we obtain $H^E(x)$.
$H^S(x)$ is obtained from $H^E(x)$ by subtracting its projection along the radial direction which
is equal to $- \frac{nx}{\|x\|^2}$. Since
\[ H^E(x)= H^S(x) - \frac{nx}{\|x\|^2} \]
is an orthogonal decomposition,
\[ \|H^E(x)\|^2 = \|H^S(x)\|^2 + \frac{n^2}{\|x\|^2}.\]

Let $\{\xi_j \mid j=1, \ldots, N-n\}$ be an orthonormal basis of $\nu_{x}M_x$.
Then the norm square of the Euclidean shape operator $A^E(x)$ of $M_x$ at $x$ is given by
\[ \|A^E(x)\|^2 = \sum_{j=1}^{N-n} \|A^E_{\xi_j}\|^2(x).\]
By formula \eqref{eqn:curvnorm},
\[ \|A^E(x)\|^2 = \sum_{j=1}^{N-n} \sum_{i=1}^g m_i \frac{\li \xi_j, \alpha_i \ri^2}{\li x, \alpha_i \ri^2}. \]
Since $\alpha_i$ are unit vectors in $\nu_{x}M_x$, $\sum_{j=1}^{N-n} \li \xi_j, \alpha_i \ri^2 = 1$. Hence
\begin{equation} \label{eqn:AE2}
\|A^E(x)\|^2= \sum_{i=1}^g  \frac{m_i}{\li x, \alpha_i \ri^2}.
\end{equation}
We can take $\xi_1=\frac{x}{\|x\|}$. Then $\{\xi_2, \ldots, \xi_{N-n}\}$ is an orthonormal basis of the normal
space of $M$ in the sphere $S^{N-1}(\|x\|)$ at point $x$.
So the norm square of the spherical shape operator is given by
\[ \|A^S(x)\|^2 =  \|A^E(x)\|^2 - \sum_{i=1}^g m_i \frac{\li \xi_1, \alpha_i \ri^2}{\li x, \alpha_i \ri^2}
= \|A^E(x)\|^2 - \frac{n}{\|x\|^2}. \]
The formula for $|A^S(x)|^2$ then follows from equation \eqref{eqn:AE2}.
This completes the proof of the lemma.
\end{proof}

Let $z \in C$ be a unit vector such that $M_z$ is a minimal isoparametric  submanifold in $S^{N-1}$. By part (2) of Theorem \ref{bo}, such $z$ is unique. Since $H^S(z)=0$, we have
\[ H^E(z)=-\sum_{i=1}^g \frac{m_i \alpha_i}{\li z, \alpha_i \ri} = -n z, \hspace{20pt} \,\,\, \|H^E(z)\|=n. \]

Let $x_0$ be an arbitrary unit vector in $C$ and
$x(t) \in C$ be the solution to the Euclidean mean curvature flow with $x(0)=x_0$. Then by equation~\eqref{eqn:limE},
\[ \lim_{t \rightarrow -\infty} \frac{x(t)}{\|x(t)\|} = z.\]
By equation \eqref{eqn:aa}, $\|x(t)\| = \sqrt{1-2nt}$. So an immediate consequence of Lemma~\ref{lem:norm} is the following
\begin{cor}
\begin{eqnarray*}
 \lim_{t \rightarrow -\infty} (1-2nt) \, \|H^E(x(t))\|^2 &=& n^2, \\
 \lim_{t \rightarrow -\infty} (1-2nt) \, \|A^E(x(t))\|^2 &=& \sum_{i=1}^g \frac{m_i}{\li z, \alpha_i \ri^2}.
\end{eqnarray*}
\end{cor}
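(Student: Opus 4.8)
The plan is to exploit the scaling homogeneity of the formulas in Lemma~\ref{lem:norm} together with the limiting direction of $x(t)$. Both $\|H^E(x)\|^2 = \|\sum_{i=1}^g m_i\alpha_i/\langle x,\alpha_i\rangle\|^2$ and $\|A^E(x)\|^2 = \sum_{i=1}^g m_i/\langle x,\alpha_i\rangle^2$ are homogeneous of degree $-2$ in $x$, since replacing $x$ by $\lambda x$ multiplies each $\langle x,\alpha_i\rangle$ by $\lambda$. Hence for any $x\in C$ one has $\|x\|^2\|H^E(x)\|^2 = \|H^E(x/\|x\|)\|^2$ and $\|x\|^2\|A^E(x)\|^2 = \|A^E(x/\|x\|)\|^2$, which reduces both limits to evaluating a fixed continuous function along the unit vectors $x(t)/\|x(t)\|$.

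First I would record the limiting direction. Equation~\eqref{eqn:limE} gives $x(t) - \tilde x(t) \to 0$ as $t\to-\infty$, where $\tilde x(t) = \sqrt{1-2nt}\,z$, while $\|x(t)\| = \sqrt{1-2nt}$ by \eqref{eqn:aa}. Dividing by $\|x(t)\| = \sqrt{1-2nt} \to \infty$ yields $x(t)/\|x(t)\| \to z$, which is exactly the statement preceding the corollary. Combining this with the homogeneity identities above gives $(1-2nt)\|H^E(x(t))\|^2 = \|H^E(x(t)/\|x(t)\|)\|^2$ and the analogous identity for $A^E$, so the two quantities in the corollary equal $\|H^E\|^2$ and $\|A^E\|^2$ evaluated along a sequence of unit vectors converging to $z$.

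It then remains to pass to the limit by continuity. The maps $x\mapsto\|H^E(x)\|^2$ and $x\mapsto\|A^E(x)\|^2$ are continuous on the open chamber $C$, since their only singularities occur where some $\langle x,\alpha_i\rangle = 0$, i.e. on $\partial C$. Because $M_z$ is a genuine top-dimensional isoparametric leaf rather than a focal submanifold, its parameter $z$ lies in the interior $C$, so $\langle z,\alpha_i\rangle > 0$ for every $i$ by \eqref{eqn:ChamRoots}. Therefore $\|H^E(x(t)/\|x(t)\|)\|^2 \to \|H^E(z)\|^2 = n^2$, using $H^E(z) = -nz$ established just above, while $\|A^E(x(t)/\|x(t)\|)\|^2 \to \sum_{i=1}^g m_i/\langle z,\alpha_i\rangle^2$. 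This gives both claimed limits.

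The only real obstacle is this continuity step: one must be sure the limiting direction $z$ stays bounded away from $\partial C$, for otherwise the degree $-2$ quantities would blow up rather than converge. This is guaranteed precisely because the unique minimal leaf (unique by part (2) of Theorem~\ref{bo}) is isoparametric and hence interior to $C$; this is also what makes the right-hand side $\sum_{i=1}^g m_i/\langle z,\alpha_i\rangle^2$ a finite positive number. No estimates beyond the already established convergence $x(t)/\|x(t)\|\to z$ are needed.
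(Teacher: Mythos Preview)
Your proposal is correct and follows essentially the same approach as the paper: the paper also establishes $\|H^E(z)\|=n$, the limit $x(t)/\|x(t)\|\to z$, and $\|x(t)\|=\sqrt{1-2nt}$, then declares the corollary an immediate consequence of Lemma~\ref{lem:norm}. You have simply made explicit the homogeneity and continuity arguments (including why $z$ lies in the interior of $C$) that the paper leaves implicit.
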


Let $y(t)$ be the spherical MCF with $y(0)=x_0$. By Theorem~\ref{bo},
\[ \lim_{t \rightarrow -\infty} y(t) = z. \]
 So by Lemma~\ref{lem:norm},
we have
\begin{cor}
\begin{eqnarray*}
 \lim_{t \rightarrow -\infty} \|H^S(y(t))\|^2 &=& 0, \\
 \lim_{t \rightarrow -\infty} \|A^S(y(t))\|^2 &=& \sum_{i=1}^g \frac{m_i}{\li z, \alpha_i \ri^2} -n.
\end{eqnarray*}
\end{cor}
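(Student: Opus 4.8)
The plan is to derive both limits simply by evaluating the explicit formulas of Lemma~\ref{lem:norm} at the limiting unit vector $z$, the only real work being to justify that this evaluation is legitimate. First I would record that the two functions
\[
x \longmapsto \|H^S(x)\|^2, \qquad x \longmapsto \|A^S(x)\|^2
\]
produced by Lemma~\ref{lem:norm} are continuous (indeed smooth) on the \emph{open} Weyl chamber $C$. Every summand appearing in them, such as $\frac{m_i}{\li x,\alpha_i\ri^2}$ and each term of $\|H^E(x)\|^2 = \|\sum_i \frac{m_i\alpha_i}{\li x,\alpha_i\ri}\|^2$, has denominator $\li x,\alpha_i\ri$, which by \eqref{eqn:ChamRoots} is strictly positive for $x\in C$; likewise $\|x\|>0$ there. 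Hence on a small neighborhood of any interior point all denominators stay bounded away from $0$ and the functions are continuous.

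Next I would invoke the convergence $\lim_{t\to-\infty} y(t)=z$ established in the proof of Theorem~\ref{bo}, together with the fact that $z$ lies in the \emph{open} chamber $C$. The latter is the crucial structural input: $M_z$ is a genuine isoparametric submanifold parallel to $M$, of full dimension $n$, and not one of the lower-dimensional focal leaves corresponding to $\partial C$; thus $\li z,\alpha_i\ri>0$ for every $i$. Since $z$ is therefore an interior point of $C$ at which the formulas of Lemma~\ref{lem:norm} are continuous, I may pass the limit inside to obtain
\[
\lim_{t\to-\infty}\|H^S(y(t))\|^2 = \|H^S(z)\|^2, \qquad
\lim_{t\to-\infty}\|A^S(y(t))\|^2 = \|A^S(z)\|^2 .
\]

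It then remains only to evaluate the right-hand sides. Because the spherical MCF keeps the evolving submanifold inside the unit sphere, $\|y(t)\|=1$ for all $t$, and $z$ was chosen as a unit vector, so $\|z\|=1$. Minimality of $M_z$ in $S^{N-1}$ gives $H^S(z)=0$, whence $\lim_{t\to-\infty}\|H^S(y(t))\|^2=0$, the first assertion. For the second I substitute $\|z\|=1$ into the last formula of Lemma~\ref{lem:norm}, getting $\|A^S(z)\|^2 = \sum_{i=1}^g \frac{m_i}{\li z,\alpha_i\ri^2} - n$, which is exactly the claimed value.

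The single point requiring genuine care — and the step I regard as the main obstacle — is the claim that $z$ is an \emph{interior} point of $C$, so that no denominator degenerates in the limit. Were $z$ to lie on $\partial C$, the expression for $\|A^S\|^2$ could blow up and the naive interchange of limit and evaluation would fail. This is precisely why part~(2) of Theorem~\ref{bo}, which locates a minimal leaf of full dimension inside the open chamber, is needed; once that is in hand, everything else is a routine continuity-and-substitution argument.
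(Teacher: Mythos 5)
Your proof is correct and takes essentially the same route as the paper: the paper's own argument is exactly to combine the convergence $\lim_{t\to-\infty}y(t)=z$ from Theorem~\ref{bo} with the formulas of Lemma~\ref{lem:norm} evaluated at $z$, using $H^S(z)=0$ and $\|z\|=1$. The continuity and interior-point justification you supply (that $z$ lies in the open chamber $C$, so no denominator $\li z,\alpha_i\ri$ vanishes) is precisely what the paper leaves implicit when it declares ``Let $z\in C$ be a unit vector such that $M_z$ is minimal,'' so your write-up is simply a more explicit version of the same proof.
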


\section{MCF of Rank 2 Isoparametric Submanifolds}
\label{sec:rank2}

In \cite{LT}, the authors showed that explicit solutions to MCF of isoparametric submanifolds can be constructed
using invariant polynomials of Coxeter groups. However, in the rank 2 case, solutions can be solved directly. In this section we write down explicit solutions for the rank 2 case. We also compute the norms of the shape operators and mean curvature of these flow solutions, and explain their relations to rigidity problems and the Chern's conjecture on the norm of the second fundamental forms of minimal hypersurfaces in spheres.

Let $M$ be a compact $n$-dimensional isoparametric hypersurface in the unit sphere $S^{n+1}$ with $g$ distinct
principal curvatures.
Then $M$ is also a rank 2 isoparametric submanifold in $\mathbb{R}^{n+2}$.
We refer readers to a survey paper by Q-S Chi \cite{Chi} on the classifciation of isoparametric hypersurfaces in spheres.

Fix $x_0 \in M$ and let $\nu_{x_0}M$ be the normal space of $M$ as a submanifold of $\mathbb{R}^{n+2}$. The normal geodesic for $M$
at $x_0$ in $S^{n+1}$ is the unit circle  in $\nu_{x_0}M$ which is centered at origin of $\mathbb{R}^{n+2}$.
$M$ has two focal submanifolds, denoted by $M_+$ and $M_-$. We may assume $\dim(M_+)\leq \dim(M_-)$. It is known that
$M_+\cup M_-$ intersects  the normal geodesic at $x_0$ in exactly $2g$ points,
evenly distributed along the circle.
Let $x_{\pm}$ be the intersection of $M_{\pm}$ with the normal geodesic which are closest to $x_0$.
We may identify $\nu_{x_0}M$ with $\mathbb{C}$ such that $x_+ = 1 \in \mathbb{C}$ and
$x_- = e^{i\pi/g} \in \mathbb{C}$.

The Coxeter group $W$ of $M$ is the dihedral group of $2g$ elements acting on $\nu_{x_0}M \cong \mathbb{C}$.
The open Weyl chamber $C$ containing $x_0=e^{i \theta_0}$ is given by
\beq\label{aj}
 C = \{ r e^{i \theta} \,  \mid \, r>0, \,\, 0 < \theta < \pi/g\}.
 \eeq
In fact, $W$ is generated by reflections in the lines $\o=0$ and $\o=\pi/g$.
The set of positive roots is given by
\[ \{ \alpha_k :=e^{i \theta_k} \mid k=1, \ldots, g \} \]
where
\[ \theta_k := \frac{k\pi}{g}  - \frac{\pi}{2}. \]
Note that there is a mistake in \cite[Example 3.4]{LT} for the formula of positive roots, but this does not affect the
answer if we choose the Weyl chamber $C$ as above.

Let $m_k$ be the multiplicity of the curvature normal of $M$ at $x_0$ which is parallel to $\alpha_k$.
These are exactly the multiplicities of the principal curvatures of $M$ as a hypersurface in $S^{n+1}$.
It was proved by M\"{u}nzner in \cite{Mu} that the number of distinct principal curvatures of $M$
must be $g=1, 2, 3, 4, $ or $6$, and
\[ m_i = m_{i+2}  \]
for all $i$, where the subscript for $m_i$ is understood as $i \mod g$  if $i > g$.
Hence there are only two possible multiplicities $m_1$ and $m_2$. We will call $M$ an isoparametric hypersurface with $g$
distinct principal curvatures with {\it multiplicity data\/} $(m_1, m_2)$.
Abresch proved in \cite{Abr} that if $g=6$, then $m_1=m_2\in \{1,2\}$.
Hence for $g=1, 3, 6$, we must have $m_1= m_2$. For $g=2$ or $4$,  $m_1$ and $m_2$ may or may not be the same.
The assumption $\dim(M_+)\leq \dim(M_-)$ implies that $m_1 \leq m_2$.

Henceforth in this section we assume the following:
\ben
\item[(a)] $M^n$ is an isoparametric hypersurface in the unit sphere $S^{n+1}$ with $g$ distinct principal curvatures and multiplicity data $(m_1, m_2)$ with $m_1\leq m_2$.
Set
\begin{equation} \label{eqn:delta}
\delta=\delta(m_1, m_2) := \left\{ \begin{array}{ll} \frac{m_2-m_1}{m_2+m_1}, & {\rm if} \,\,\, g \geq 2, \\
                                    0,  & {\rm if} \,\,\, g=1.
                  \end{array} \right.
\end{equation}
Then $0\leq \delta <1$, and $\delta$ is possibly non-zero only if $g=2$ or $4$.
\item[(b)]
Fix $x_0\in M$, we identify $\nu_{x_0}M$ as $\C$ so that the focal submanifolds $M_+$ and $M_-$ are parallel
to $M$ through $1$ and $e^{i\pi/g}$ respectively as explained before.
\item[(c)] Let $C\subset \nu_{x_0} M$ be the Weyl chamber defined by \eqref{aj}. Given $x\in C$, let  $M_x$ denote the isoparametric hypersurface
 parallel to $M$ as submanifolds in $\R^{n+2}$.
\een

\begin{lem}\label{ae} Let $M, C, g, m_1, m_2, \d= \d(m_1,m_2)$ be as above.
If $x=re^{i \theta} \in C$, then the Euclidean and spherical mean curvature vectors for the isoparametric submanifold $M_x$ parallel to $M$ at  $x$
are given by
\begin{equation} \label{eqn:HE2}
H^E(x)= - \frac{n}{r} e^{i \theta} \left\{ 1+ i \cot g \theta + i \delta \csc g \theta  \right\}
\end{equation}
and
\begin{equation} \label{eqn:HS2}
H^S(x)= - \frac{n}{r} i e^{i \theta} \left\{  \cot g \theta + \delta \csc g \theta  \right\}.
\end{equation}
In particular, let $\o_{\rm min}$ be defined by
\begin{equation} \label{eqn:min}
\cos g \theta_{\rm min} = - \delta, \hspace{20pt} 0 < \theta_{\rm min} < \frac{\pi}{g}.
\end{equation}
  Then the isoparametric hypesurface through $e^{i\o_{\rm min}}$ is minimal.
\end{lem}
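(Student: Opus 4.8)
The plan is to start from the general formulas of Lemma~\ref{lem:norm}, which express $H^E(x)$ and $H^S(x)$ purely in terms of the positive roots $\alpha_i$ and multiplicities $m_i$, and then substitute the explicit rank~$2$ data. Writing $x = re^{i\theta}$ and $\alpha_k = e^{i\theta_k}$ with $\theta_k = \tfrac{k\pi}{g} - \tfrac{\pi}{2}$, the Euclidean inner product becomes $\langle x,\alpha_k\rangle = r\cos(\theta-\theta_k) = r\sin(\tfrac{k\pi}{g}-\theta)$, which is positive on $C$ since $0 < \tfrac{k\pi}{g}-\theta < \pi$ there. Multiplying $H^E(x) = -\tfrac{1}{r}\sum_k m_k e^{i\theta_k}/\cos(\theta-\theta_k)$ by $e^{-i\theta}$ and using $e^{i(\theta_k-\theta)}/\cos(\theta-\theta_k) = 1 - i\tan(\theta-\theta_k)$ together with $\tan(\theta-\theta_k) = -\cot(\theta-\tfrac{k\pi}{g})$, I reduce the whole computation to a single cotangent sum:
\[ e^{-i\theta}H^E(x) = -\frac{1}{r}\sum_{k=1}^g m_k\Big(1 + i\cot\big(\theta - \tfrac{k\pi}{g}\big)\Big) = -\frac{1}{r}\big(n + iS\big), \qquad S := \sum_{k=1}^g m_k\cot\big(\theta - \tfrac{k\pi}{g}\big), \]
where I have used $n=\dim M=\sum_k m_k$. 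Thus the content of \eqref{eqn:HE2} is exactly the claim $S = n\big(\cot g\theta + \delta\csc g\theta\big)$.

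To evaluate $S$ I would exploit the relation $m_k = m_{k+2}$, i.e. $m_k = m_1$ for $k$ odd and $m_2$ for $k$ even, by writing $m_k = \tfrac{m_1+m_2}{2} + (-1)^{k-1}\tfrac{m_1-m_2}{2}$. Then $S = \tfrac{m_1+m_2}{2}\,\Sigma_1 + \tfrac{m_1-m_2}{2}\,\Sigma_2$, where
\[ \Sigma_1 := \sum_{k=1}^g \cot\big(\theta - \tfrac{k\pi}{g}\big), \qquad \Sigma_2 := \sum_{k=1}^g (-1)^{k-1}\cot\big(\theta - \tfrac{k\pi}{g}\big). \]
Granting the identities $\Sigma_1 = g\cot g\theta$ and $\Sigma_2 = -g\csc g\theta$, and using $n = \tfrac{g}{2}(m_1+m_2)$ (the alternating correction $\tfrac{m_1-m_2}{2}\sum_k(-1)^{k-1}$ vanishes for even $g$, and is killed by $m_1=m_2$ for odd $g$) together with $\delta = \tfrac{m_2-m_1}{m_2+m_1}$, one gets $S = n\cot g\theta + n\delta\csc g\theta$ at once. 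Note that for odd $g$ one has $m_1=m_2$, so $\delta=0$ and only $\Sigma_1$ contributes, consistent with $\delta$ being possibly nonzero only for $g=2,4$.

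The main work, and the step I expect to be the real obstacle, is establishing these two trigonometric identities; the cosecant identity $\Sigma_2 = -g\csc g\theta$ in particular is less standard. I would prove both by passing to roots of unity: with $w = e^{2i\theta}$ and $\omega_k = e^{2ik\pi/g}$ one checks $\cot(\theta-\tfrac{k\pi}{g}) = i\,\tfrac{w+\omega_k}{w-\omega_k}$, and $\{\omega_k\}_{k=1}^g$ are precisely the roots of $w^g-1$. Writing $\tfrac{w+\omega_k}{w-\omega_k} = -1 + \tfrac{2w}{w-\omega_k}$ and invoking the partial-fraction expansions over the $g$-th roots of unity
\[ \sum_{k=1}^g \frac{1}{w-\omega_k} = \frac{g\,w^{g-1}}{w^g-1}, \qquad \sum_{k=1}^g \frac{\omega_k^{g/2}}{w-\omega_k} = \frac{g\,w^{g/2-1}}{w^g-1} \]
(the first is the logarithmic derivative of $w^g-1$; the second, valid for even $g$, follows by matching residues), together with $(-1)^k = \omega_k^{g/2}$ and $\sum_k \omega_k^{g/2}=0$, collapses $\Sigma_1$ to $i g\tfrac{w^g+1}{w^g-1} = g\cot g\theta$ and $\Sigma_2$ to $-i\,\tfrac{2gw^{g/2}}{w^g-1} = -g\csc g\theta$ after rewriting in terms of $e^{\pm i g\theta}$.

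Finally I would assemble the pieces. The evaluation of $S$ yields \eqref{eqn:HE2}; and since Lemma~\ref{lem:norm} gives $H^S(x) = H^E(x) + \tfrac{nx}{\|x\|^2} = H^E(x) + \tfrac{n}{r}e^{i\theta}$, the constant $1$ inside the braces cancels, producing \eqref{eqn:HS2}. For the last assertion, $M_x$ is minimal in $S^{n+1}$ exactly when $H^S(x)=0$; since $e^{i\theta}\neq 0$ and $\sin g\theta > 0$ on $0<\theta<\tfrac{\pi}{g}$, formula \eqref{eqn:HS2} vanishes iff $\cos g\theta + \delta = 0$, i.e. iff $\theta=\theta_{\min}$ as in \eqref{eqn:min}. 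Such $\theta_{\min}$ exists and is unique because $\cos$ is strictly decreasing from $1$ to $-1$ on $(0,\pi)$ while $-\delta\in(-1,0]$, placing $g\theta_{\min}\in[\tfrac{\pi}{2},\pi)$.
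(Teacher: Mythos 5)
Your proof is correct, and while it shares the paper's overall skeleton --- specialize Lemma \ref{lem:norm} to the rank-2 root data $\alpha_k = e^{i\theta_k}$, $\theta_k = \tfrac{k\pi}{g}-\tfrac{\pi}{2}$, reduce \eqref{eqn:HE2} to evaluating a multiplicity-weighted cotangent sum, then get \eqref{eqn:HS2} by adding $\tfrac{n}{r}e^{i\theta}$ and read off minimality from $\cos g\theta = -\delta$ --- the evaluation of that sum goes by a genuinely different route. The paper splits it as ``$m_1$ times the full sum plus $(m_2-m_1)$ times the even-index sum,'' evaluates both occurrences with the single classical identity $\sum_{k=1}^{g}\cot(\tfrac{k\pi}{g}+\beta)=g\cot g\beta$ (applied at level $g$ and again at level $g/2$), and then needs the half-angle formula $\cot\tfrac{g\theta}{2}=\cot g\theta+\csc g\theta$ to make the cosecant term appear. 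You instead split the multiplicities symmetrically, $m_k=\tfrac{m_1+m_2}{2}+(-1)^{k-1}\tfrac{m_1-m_2}{2}$, reducing everything to the full sum $\Sigma_1$ and the alternating sum $\Sigma_2$, and you prove $\Sigma_1=g\cot g\theta$ and (for even $g$) $\Sigma_2=-g\csc g\theta$ from scratch by partial fractions over the $g$-th roots of unity. The two decompositions are algebraically equivalent --- the half-angle formula is exactly the statement that the even-index sum equals $\tfrac{1}{2}(\Sigma_1-\Sigma_2)$ --- but yours buys a self-contained, residue-style derivation of both identities (the paper's identity \eqref{eqn:sumcot} is only justified later, via the cotangent polynomial, inside the proof of Lemma \ref{lem:sumcot2}) and makes the emergence of $\delta=\tfrac{m_2-m_1}{m_2+m_1}$ and of $n=\tfrac{g}{2}(m_1+m_2)$ completely transparent; the paper's version buys brevity by reusing one quotable identity. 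The one point a reader must verify in your argument is the odd-$g$ case, where the alternating identity $\Sigma_2=-g\csc g\theta$ is false (e.g.\ $g=1$) but irrelevant because its coefficient $\tfrac{m_1-m_2}{2}$ vanishes; you flag this correctly, so there is no gap.
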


\begin{proof}
Note that
\begin{equation} \label{eqn:inner}
 \li x, \alpha_k \ri = r \cos(\theta-\theta_k).
\end{equation}
By Lemma \ref{lem:norm}, the Euclidean mean curvature vector $H^E$ for the isoparametric submanifold $M_x$ at $x$
is given by
\begin{eqnarray}
H^E(x) &=& - \sum_{k=1}^{g} \frac{m_k}{r \cos(\theta-\theta_k)} e^{i \theta_k} \nonumber \\
&=& - \frac{e^{i \theta}}{r}  \sum_{k=1}^{g} m_k \left\{ 1+i \tan(\theta_k-\theta) \right\}  \nonumber \\
&=& - \frac{e^{i \theta}}{r}   \left\{ n - i \sum_{k=1}^{g} m_k \cot(\frac{k \pi}{g}-\theta) \right\}  \nonumber \\
&=& - \frac{e^{i \theta}}{r} \left\{n- i  m_1 \Phi_g(\theta) -i (m_2-m_1) \Psi_g(\theta) \right\}. \label{eqn:1neq2}
\end{eqnarray}
where
\[
\Phi_g(\theta) := \sum_{k=1}^{g}  \cot(\frac{k \pi}{g}-\theta), \hspace{20pt}
 \Psi_g(\theta) := \sum_{j=1}^{\lfloor g/2 \rfloor} \cot(\frac{2j \pi}{g}-\theta). \]
By an elementary trigonometric identity
\begin{equation} \label{eqn:sumcot}
\sum_{k=1}^{g} \cot(\frac{k\pi}{g}+ \beta) \, = \, g \cot g \beta
\end{equation}
for any angle $\beta$ and any positive integer $g$, we have
\begin{eqnarray*}
\Phi_g(\theta)
&=& - g \cot g \theta
\end{eqnarray*}
for all positive integer $g$ and
\[ \Psi_g(\theta) = - \frac{g}{2} \cot \frac{g}{2} \theta = - \frac{g}{2} \left( \cot g \theta + \csc g \theta \right) \]
if $g$ is an even positive integer.

Since $m_1=m_2$ if $g$ is odd, by equation \eqref{eqn:1neq2}, we have
\begin{eqnarray*}
H^E(x)
&=& - \frac{e^{i \theta}}{r} \left\{n + i m_1 g \cot g \theta
            + i (m_2-m_1) \frac{g}{2}  \left( \cot g \theta + \csc g \theta \right) \right\}. \\
&=& - \frac{e^{i \theta}}{r} \left\{n + i n \cot g \theta
            + i  n \delta  \csc g \theta  \right\}
\end{eqnarray*}
for all $g$.
The last equality follows from the fact that
\begin{equation} \label{eqn:dim}
(m_1+m_2)g=2n.
\end{equation}
This proves equation \ref{eqn:HE2}. Equation \ref{eqn:HS2} then follows from equation \ref{eqn:HE2} and Lemma \ref{lem:norm}.
\end{proof}

Next we  give explicit solutions to the MCF for rank 2 isoparametric submanifolds.

\bprop \label{lem:MCF2}
Let $x(t)=r(t) e^{i \theta(t)}  \in C $ be the solution  for the Euclidean MCF of $M$ with
 $ x(0)=e^{i \theta_0} $. Then
\begin{align}
r(t) &= \sqrt{1-2nt}, \label{ag1} \\
\cos g \theta(t) &= (1-2nt)^{-\frac{g}{2}}  \left\{ \cos g\theta_0 + \delta \right\}-\delta. \label{ag2}
\end{align}
\eprop

\begin{proof}
By equation \eqref{eqn:HE2}, the Euclidean MCF $x'(t)=H^E(x)$ written in terms of $r$ and $\o$ becomes
$$r'(t)e^{i\o}+ i\o' r e^{i\o}= -\frac{n}{r} e^{i \theta} (1+ i (\cot g\o +\d \csc g\o)).$$
Note that $e^{i \theta}$ and $i e^{i \theta}$ are perpendicular vectors in $\mathbb{C}$, we have
$$\bca r'= -\frac{n}{r},\\
\o'= -\frac{n}{r^2} (\cot g\o+ \d \csc g\o).\eca$$
Integrate directly to get the solution.
\end{proof}

\bprop \label{cor:SMCF2}
Let $y(t)= e^{i \theta(t)}  \in C $ be the solution  for the spherical MCF of $M$ with
 $ y(0)=e^{i \theta_0} $.  Then
\beq\label{al}
\cos g \theta(t) = e^{gnt} \left\{ \cos g\theta_0 + \delta \right\}-\delta .
\eeq
\eprop

\begin{proof}
By equation \eqref{eqn:HS2}, we can write the spherical MCF $y'(t)= H^S(y)$ in terms of $\o$ and obtain
 $$\o'= -n (\cot g\o +\d \csc g\o).$$
 Integrate directly to get the solution.
\end{proof}

\ms
By Propositions \ref{lem:MCF2} and \ref{cor:SMCF2}, for both Euclidean and spherical MCF of isoparametric hypersurfaces, we have
 \beq\label{ak}
 \lim_{t \rightarrow - \infty} \theta(t) = \theta_{\rm min}.
 \eeq
 Recall that $\o_{\min}$ is defined by \eqref{eqn:min} and the isoparametric hypersurface in shpere through $e^{i\o_{\min}}$ is minimal. So the spherical MCF for isoparametric hypersurfaces tends to the minimal one as $t\to -\infty$.
 Moreover,
 \ben
\item  if $\o_0\in (0, \o_{\min})$, then the spherical MCF collapses to the focal submanifold $M_+$
which passes through $1$ as $t\to T_+$,
\item if $\o_0 \in (\o_{\min}, \frac{\pi}{g})$, then the spherical MCF collapses to the focal submanifold $M_-$ which passes
through $e^{i\pi/g}$ as $t\to T_-$,
\een
where $T_\pm= \frac{1}{gn} \ln \frac{\d\pm 1}{\d+\cos g\o_0}$ and $\d=\d(m_1, m_2)$.

\ms

\begin{eg} \label{ex:torus}
 Isoparametric hypersurfaces in $S^{n+1}$ with two distinct principal curvatures are embedded tori
 \[ T(\o) := S^k(\cos\o)\times S^{n-k}(\sin\o)\]
  with $0 < \o < \frac{\pi}{2}$.
  They have principal curvatures $-\tan\o$ and $\cot\o$
   with multiplicities $k$ and $n-k$ respectively. Let $\o_\min\in (0, \frac{\pi}{2})$ satisfying
 \[ \cos 2\o_\min=-\d= -\frac{n-2k}{n}.\]
  Then
 \[ \cos\o_\min= \sqrt{\frac{n-k}{n}}, \hspace{20pt} \sin\o_\min= \sqrt{\frac{k}{n}}\]
  and $T(\o_\min)$ is the well-known minimal Clifford torus. The  spherical MCF with initial data $T(\o_0)$ is $T(\o(t))$, where
 $$\cos 2\o(t)= e^{2nt} (\d+ \cos 2\o_0) -\d.$$
 Note that:
 \ben
 \item $\lim_{t\to-\infty}\o(t)= \o_\min$, so the flow tends to the minimal torus.
\item If $\o_0\in (0, \o_\min)$, then the flow collapses to $S^k(1)\times 0$ as $$t\to \frac{1}{2n}\ln \frac{\d+1}{\d+\cos2\o_0}.$$
\item If $\o_0\in (\o_\min, \frac{\pi}{2})$, then the flow collapses to $0\times S^{n-k}(1)$ as $$t\to   \frac{1}{2n}\ln \frac{\d-1}{\d+\cos2\o_0}.$$
\een
\end{eg}

\begin{eg} \label{ex:g3n3}
  The simplest isoparametric hypersurfaces in the sphere with three distinct principal curvatures are those with uniform multiplicity $1$.
 They are the embeddings of manifold of flags of $\R^3$ in $S^4$ as principal orbits  of the conjugation action of $SO(3)$ on the space of trace zero real symmetric $3 \times 3$
matrices.  Each such orbit contains a diagonal matrix $\bc =\diag(c_1, c_2, c_3)$, where $c_1, c_2, c_3$ are distinct real numbers with
$$c_1+c_2+c_3=0, \hspace{20pt} c_1^2+c_2^2+ c_3^2=1.$$
We denote this orbit by $M_\bc$.
 The minimal isoparametric hypersurface in this family is the principal orbit $M_{\bc_{\min}}$ where
  $$\bc_{\min}= \diag(\frac{1}{\sqrt{2}}, 0, -\frac{1}{\sqrt{2}}).$$
Two focal submanifolds are the singular orbits $M_{\bc_+}$ and  $M_{\bc_-}$ where
\[ \bc_+ = \frac{1}{\sqrt{6}}\diag(1,1,-2),  \hspace{20pt} \bc_- = \frac{1}{\sqrt{6}} \diag(2,-1,-1). \]
Identify the normal plane of $M_\bc$ at $\bc$ as $\C$ with $\bc_+$ identified as $1$ and $\frac{1}{\sqrt{2}}\diag(1, -1,0)$ as $i= e^{i\frac{\pi}{2}}$. Under this identification
$\bc_{\min}$ is identified with $e^{i\frac{\pi}{6}}$ and $\bc_-$ is identified with $e^{i \frac{\pi}{3}}$. By \eqref{al}, the spherical MCF for this family is $M_{\bc(t)}$ with
$$\bc(t)= \cos \o(t) \frac{1}{\sqrt{6}}\diag(1,1,-2) + \sin \o(t) \frac{1}{\sqrt{2}}\diag(1, -1,0),$$
where  $$\cos 3\o(t)= e^{3nt} \cos 3\o(0).$$
So we have
\ben
\item $\lim_{t\to -\infty} M_{\bc(t)}= M_{\bc_\min}$, the minimal isoparametric hypersurface.
\item If $\o(0)\in (0, \frac{\pi}{6})$, then $M_t$ collapses to the singular orbit $M_{\bc_+}$ as $t\to \frac{1}{3n}\ln \frac{ 1}{\cos3\o(0)}$.
\item If $\o(0)\in (\frac{\pi}{6}, \frac{\pi}{3})$, then the flow collapses to the singular orbit  $M_{\bc_-}$
  as $t\to \frac{1}{3n}\ln \frac{- 1}{\cos3\o(0)}$.
\een
Note that both $M_{\bc_+}$ and $M_{\bc_-}$ are embeddings of $\R P^2$ in $S^4$, classically known as the Veronese embeddings of the real projective plane in $S^4$.
\end{eg}

\ms

Next we compute the norm of shape operators of isoparametric hypersurfaces. First we need the following elementary identity (we
include a proof here for completeness).
\begin{lem} \label{lem:sumcot2}
\[ \sum_{k=1}^{g} \cot^2(\frac{k\pi}{g}+\beta) = g^2 \csc^2(g\beta) - g\]
for any angle $\beta$ and any positive integer $g$.
\end{lem}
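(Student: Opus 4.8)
The plan is to obtain this identity by differentiating the already-established sum formula \eqref{eqn:sumcot} with respect to the angle $\beta$. Since \eqref{eqn:sumcot} is a genuine identity of functions of $\beta$ (valid wherever the cotangents are finite), I may differentiate both sides, and the derivative identity will again hold on the common domain, hence everywhere by continuation.

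First I would record the two elementary facts I intend to use: the differentiation rule $\frac{d}{d\beta}\cot\phi = -\csc^2\phi$, applied with $\phi=\frac{k\pi}{g}+\beta$ so that the inner derivative is $1$, together with the Pythagorean relation $\csc^2\phi = 1+\cot^2\phi$. Differentiating the left-hand side of \eqref{eqn:sumcot} term by term then gives $-\sum_{k=1}^{g}\csc^2(\frac{k\pi}{g}+\beta) = -g - \sum_{k=1}^{g}\cot^2(\frac{k\pi}{g}+\beta)$, where the constant $-g$ simply collects the $g$ copies of $-1$ coming from $\csc^2 = 1 + \cot^2$.

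Next I would differentiate the right-hand side $g\cot g\beta$, obtaining $-g^2\csc^2(g\beta)$ by the chain rule. Equating the two derivatives and solving for the sum of squared cotangents yields $\sum_{k=1}^{g}\cot^2(\frac{k\pi}{g}+\beta) = g^2\csc^2(g\beta) - g$, which is exactly the claim.

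There is no serious obstacle here; the only point deserving a word of care is the legitimacy of differentiating \eqref{eqn:sumcot} term by term, which is immediate because each summand is smooth away from its poles and the identity holds on an interval, so its derivative matches the derivative of the right-hand side there. An alternative route, which I would not pursue since it is longer, would expand $\csc^2(g\beta)$ directly through partial fractions or the cotangent addition formula; differentiating \eqref{eqn:sumcot} is the shortest path and reuses a result already in hand.
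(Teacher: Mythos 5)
Your proof is correct, but it takes a genuinely different route from the paper's. The paper treats $y_k=\cot(\frac{k\pi}{g}+\beta)$, $k=1,\ldots,g$, as the $g$ roots of an explicit degree-$g$ polynomial equation coming from the multiple-angle expansion of $\cot g\beta$, and then reads off the power sums $\sum_k y_k$ and $\sum_k y_k^2$ from the coefficients of $y^{g-1}$ and $y^{g-2}$ via Vieta's formulas; that single algebraic argument yields formula \eqref{eqn:sumcot} and the lemma simultaneously. You instead take \eqref{eqn:sumcot} as already established and differentiate it in $\beta$, using $\frac{d}{d\beta}\cot\phi=-\csc^2\phi$ and $\csc^2\phi=1+\cot^2\phi$; the computation is right, and since the sum is finite, term-by-term differentiation needs no justification at all (your one point of care is even easier than you suggest), while the identity propagates from the open dense set where all terms are finite to all angles where both sides are defined. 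The structural caveat is that your argument is parasitic on \eqref{eqn:sumcot}: in the paper that identity is quoted as elementary when first used, and its proof is in fact supplied by the same Vieta computation inside this very lemma's proof, so the paper's approach is self-contained (one argument proving both identities), whereas yours requires \eqref{eqn:sumcot} to be known independently---acceptable, since it is a standard identity, but worth flagging. What your approach buys in exchange is brevity and flexibility: it avoids the multiple-angle formula entirely, and iterating the differentiation would produce further identities for higher power sums of these cotangents, which the algebraic route would recover only through more elaborate symmetric-function manipulations.
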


\begin{proof}
By the well known identity
\[ \cot g \beta = \frac{\sum_{k \,\,\, {\rm even}} (-1)^{k/2} \left(\begin{array}{c} g \\ k \end{array} \right) \cot^{g-k} \beta}
                        {\sum_{k \,\,\, {\rm odd}} (-1)^{(k-1)/2} \left(\begin{array}{c} g \\ k \end{array} \right) \cot^{g-k} \beta}, \]
we know that
$y_k = \cot(\frac{k\pi}{g}+\beta)$, $k=1, \ldots, g$, are solutions of the following degree $g$ polynomial eqation for $y$:
\[ \sum_{k \,\,\, {\rm even}} (-1)^{k/2} \left(\begin{array}{c} g \\ k \end{array} \right) y^{g-k}
- \cot g \beta \sum_{k \,\,\, {\rm odd}} (-1)^{(k-1)/2} \left(\begin{array}{c} g \\ k \end{array} \right) y^{g-k}  = 0.\]
Hence the formula in this lemma, as well as formula \eqref{eqn:sumcot}, can be proved using
coefficients of $y^{g-1}$ and $y^{g-2}$ in the above equation.
\end{proof}

\begin{lem} \label{lem:norm2}
For $x=r e^{i \theta} \in C$, the norm square of Euclidean and spherical shape operators of $M_x$ at $x$
are given by
\begin{align}
 \| A^E(x) \|^2 &= \frac{ng}{r^2 } \csc^2 g \theta \left( 1 + \delta \cos g \theta  \right), \label{aq1} \\
 \| A^S(x) \|^2 &= \frac{ng}{r^2}  \csc^2 g \theta \left(1 + \delta \cos g \theta  \right)-\frac{n}{r^2}. \label{aq2}
\end{align}
\end{lem}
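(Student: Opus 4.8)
The plan is to reduce the Euclidean formula from Lemma~\ref{lem:norm} to a single trigonometric sum in the angular variable $\theta$ and then evaluate it using the identity in Lemma~\ref{lem:sumcot2}. Starting from $\|A^E(x)\|^2 = \sum_{i=1}^g m_i/\li x, \alpha_i\ri^2$, I would substitute $\li x, \alpha_k\ri = r\cos(\theta-\theta_k)$ from equation~\eqref{eqn:inner}. Since $\theta_k = k\pi/g - \pi/2$, we have $\cos(\theta-\theta_k) = \cos\!\left((\theta - k\pi/g) + \pi/2\right) = \sin(k\pi/g - \theta)$, so that
\[
\|A^E(x)\|^2 = \frac{1}{r^2}\sum_{k=1}^g m_k \csc^2\!\left(\frac{k\pi}{g} - \theta\right).
\]
Using the relation $m_i = m_{i+2}$ (so $m_k = m_1$ for $k$ odd and $m_k = m_2$ for $k$ even), I would split this as $m_1$ times the full sum plus $(m_2-m_1)$ times the sum over even indices, exactly paralleling the decomposition into $\Phi_g$ and $\Psi_g$ already used in the proof of Lemma~\ref{ae}.

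Next I would evaluate the two sums. Adding $g$ to the identity of Lemma~\ref{lem:sumcot2} and using $\csc^2 = \cot^2 + 1$ gives the clean formula $\sum_{k=1}^g \csc^2(k\pi/g + \beta) = g^2 \csc^2 g\beta$. Applying it with $\beta = -\theta$ evaluates the full sum to $g^2 \csc^2 g\theta$. For the even-index sum, which is only relevant when $g$ is even (when $g$ is odd we have $m_1 = m_2$ and $\delta = 0$, so this term drops out), I would set $g' = g/2$ and note that $2j\pi/g = j\pi/g'$, so the same identity yields $(g/2)^2 \csc^2\!\left((g/2)\theta\right)$.

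The final step is to recombine these into a single expression in $\csc^2 g\theta$. Using the double-angle relations $\sin^2\!\left((g/2)\theta\right) = (1-\cos g\theta)/2$ and $\sin^2 g\theta = (1-\cos g\theta)(1+\cos g\theta)$, one gets $\csc^2\!\left((g/2)\theta\right) = 2(1+\cos g\theta)\csc^2 g\theta$. Substituting and factoring produces
\[
\|A^E(x)\|^2 = \frac{g^2}{r^2}\,\csc^2 g\theta \cdot \frac{(m_1+m_2) + (m_2-m_1)\cos g\theta}{2},
\]
and then the dimension relation $(m_1+m_2)g = 2n$ from equation~\eqref{eqn:dim} together with $m_2 - m_1 = \delta(m_1+m_2)$ collapses the bracket to $(n/g)(1 + \delta\cos g\theta)$, giving \eqref{aq1}. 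The spherical formula \eqref{aq2} then follows immediately from the last identity in Lemma~\ref{lem:norm}, namely $\|A^S(x)\|^2 = \|A^E(x)\|^2 - n/\|x\|^2$ with $\|x\| = r$. The only real obstacle here is the trigonometric bookkeeping: getting the even-index subsum into the form where the cosecant identity applies, and then eliminating the half-angle term $\csc^2\!\left((g/2)\theta\right)$ in favor of $\csc^2 g\theta$; the odd-$g$ case needs no work since $\delta = 0$ removes the second sum entirely.
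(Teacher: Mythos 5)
Your proof is correct and follows essentially the same route as the paper's: both start from Lemma~\ref{lem:norm} with equation~\eqref{eqn:inner}, split the sum into $m_1$ times the full sum plus $(m_2-m_1)$ times the even-index sum, evaluate via Lemma~\ref{lem:sumcot2}, and eliminate the half-angle term (your identity $\csc^2\!\left(\tfrac{g}{2}\theta\right)=2(1+\cos g\theta)\csc^2 g\theta$ is exactly the paper's half-angle formula $\csc^2 \tfrac{g}{2}\theta = 2\csc^2 g\theta + 2\cot g\theta \csc g\theta$ in disguise). The only cosmetic difference is that you work with $\csc^2$ throughout instead of writing $\csc^2 = 1+\cot^2$ and carrying the constant $n/r^2$ separately, which changes nothing of substance.
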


\begin{proof}
By Lemma \ref{lem:norm} and equation \eqref{eqn:inner}, we have
\begin{eqnarray*}
\|A^E(x)\|^2 & =& \sum_{k=1}^g \frac{m_k}{r^2 \cos^2(\theta-\theta_k)} \\
&=& \sum_{k=1}^g \frac{m_k}{r^2} \left\{1+ \cot^2(\frac{k\pi}{g}-\theta) \right\} \\
&=& \frac{n}{r^2}+\frac{m_1}{r^2} \sum_{k=1}^g  \cot^2(\frac{k\pi}{g}-\theta)
    +\frac{m_2-m_1}{r^2} \sum_{j=1}^{\lfloor g/2 \rfloor}  \cot^2(\frac{2j\pi}{g}-\theta).
\end{eqnarray*}

By Lemma \ref{lem:sumcot2}, we have
\begin{eqnarray*}
\|A^E(x)\|^2
&=& \frac{n}{r^2}+\frac{m_1}{r^2} \left( g^2 \csc^2 g\theta-g \right)
    +\frac{m_2-m_1}{r^2} \left( \frac{g^2}{4} \csc^2 \frac{g}{2}\theta-\frac{g}{2} \right).
\end{eqnarray*}

The Lemma then follows from half angle formula
\[ \csc^2 \frac{g}{2}\theta = 2 \csc^2 g \theta + 2 \cot g\theta \csc g \theta,\]
equation \eqref{eqn:dim}, and Lemma \ref{lem:norm}.
$\Box$

\begin{cor} \label{cor:normmin2}
If $M_x$ is minimal in the sphere, then
\[  \| A^E(x) \|^2 = \frac{ng}{r^2}, \hspace{20pt} \|A^S(x) \|^2 = \frac{n}{r^2} \left( g-1 \right). \]
\end{cor}
{\bf Proof}: By equation \eqref{eqn:min}, if $M_x$ is minimal in the sphere, then $x=r e^{i \theta}$ with $\theta$ satisfying the condition
\[ \delta = - \cos g \theta. \]
So
\[
1 + \delta \cos g \theta =  1 - \cos^2 g \theta = \sin^2 g \theta.
\]
The Corollary then follows from Lemma \ref{lem:norm2}.
\end{proof}

\brem  It is known (cf. \cite{Mu}) that principal curvatures of an isoparametric hypersurface $M^n$ in $S^{n+1}$ are $\{\cot(\o+\frac{(i-1)\pi}{g}) \n 1\leq i\leq g\}$ with multiplicity  data $(m_1, m_2)$.  We can also compute $||A^S||^2$ and $|| H^S||^2$ using this result and get the same result as before (with a suitable assignment of multiplicity to each principal curvature).
 For example, the formula of  $||A^S||^2$ given in Corallary \ref{cor:normmin2}  was obtained this way in \cite{PT1}.
\erem

\ms
\brem \label{rem:sharp3}
It was mentioned in the end of \cite{HS} that the condition \eqref{at} is sharp because for the torus $S^1(\cos\o) \times S^{n-1}(\sin \o)\subset S^{n+1}$, the quantity $||A^S||^2-\frac{1}{n-1}||H^S||^2 - 2$ can be arbitrarily close to $0$ as $\o\to 0$. This is the case of $g=2$ isoparametric hypersurfaces with principal curvatures $-\tan\o$, $\cot\o$ of multiplicities $1$ and $(n-1)$ respectively. A simple computation implies that
\beq\label{au}
||A^S||^2-\frac{1}{n-1} ||H^S||^2-2= \frac{n-2}{n-1} \tan^2\o
\eeq
(cf. also \cite{H}).
We have seen in Example \ref{ex:torus} that this ancient solution of spherical MCF has the following properties: $\o(t)\to 0$ as $t\to T$ for some finite $T>0$ and $\o(t)\to \o_{\min}$ as $t\to -\infty$, where $\cos 2\o_{\min}= -\d= -\frac{n-2}{n}$.  So $\tan^2\o_\min= n-1$.
 Note that the right hand side of \eqref{au} is arbitrarily small when $t\to T$, but is not arbitrarily small (it tends to $n-2$) as $t\to -\infty$. Hence this example does not justify the sharpness of condition \eqref{at} in Huisken-Sinestrari's Theorem mentioned in the introduction.
\erem

To study the sharpness of condition \eqref{ai} in Huisken-Sinestrari's Theorem, we need the following results:
\begin{lem}
For $x=r e^{i \theta} \in C$,
\begin{eqnarray} \label{eqn:AH=d}
 && \|A^S(x)\|^2 - \frac{g}{2n} \| H^S(x)\|^2 \nonumber \\
 &=&  \frac{n}{2r^2} \left\{ g (1-\delta^2) \csc^2 g \theta + (g-2) \right\}.
\end{eqnarray}
In case that all multiplicities of the isoparametric submanifolds are the same, which is always true when $g=1, 3, 6$, we can get a simpler
formula
\begin{eqnarray} \label{eqn:AH=}
\|A^S(x)\|^2 - \frac{g}{n} \| H^S(x)\|^2 &=&  \frac{n(g-1)}{r^2}.
\end{eqnarray}
\end{lem}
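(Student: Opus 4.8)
The plan is to establish both identities by direct substitution of the formulas from Lemma~\ref{lem:norm2} and Lemma~\ref{ae} into the combination $\|A^S(x)\|^2 - \frac{g}{2n}\|H^S(x)\|^2$, and then simplify using the relation $\|H^S(x)\|^2 = \|H^E(x)\|^2 - n^2/\|x\|^2$ together with elementary trigonometric identities. First I would compute $\|H^S(x)\|^2$ explicitly from equation~\eqref{eqn:HS2}: since $H^S(x) = -\frac{n}{r}ie^{i\theta}(\cot g\theta + \delta\csc g\theta)$ and $|ie^{i\theta}| = 1$, we get $\|H^S(x)\|^2 = \frac{n^2}{r^2}(\cot g\theta + \delta\csc g\theta)^2$. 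Expanding the square gives $\frac{n^2}{r^2}(\cot^2 g\theta + 2\delta\cot g\theta\csc g\theta + \delta^2\csc^2 g\theta)$.

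Next I would take the expression for $\|A^S(x)\|^2$ from equation~\eqref{aq2}, namely $\frac{ng}{r^2}\csc^2 g\theta(1 + \delta\cos g\theta) - \frac{n}{r^2}$, and form the difference $\|A^S(x)\|^2 - \frac{g}{2n}\|H^S(x)\|^2$. The factor $\frac{g}{2n}$ is chosen so that the leading $\frac{ng}{r^2}\csc^2 g\theta$ term in $\|A^S\|^2$ and the $\frac{gn}{2r^2}\cdot(\text{stuff})\csc^2$ contribution from $\|H^S\|^2$ combine cleanly. The key simplification relies on rewriting $\cot^2 g\theta = \csc^2 g\theta - 1$ and collecting the coefficient of $\csc^2 g\theta$. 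After substitution, the coefficient of $\frac{n}{r^2}\csc^2 g\theta$ should reduce to $g - \frac{g}{2}(1 + 2\delta\cos g\theta + \delta^2) \cdot(\text{adjusted})$; the cross term $2\delta\cot g\theta\csc g\theta = 2\delta\cos g\theta\,\csc^2 g\theta$ is exactly what cancels the $\delta\cos g\theta$ term carried by $\|A^S\|^2$, leaving only the factor $(1-\delta^2)$ multiplying $\csc^2 g\theta$. The remaining constant terms, $-\frac{n}{r^2}$ from $\|A^S\|^2$ together with the $+\frac{g}{2}$-weighted $-1$ from $\cot^2 = \csc^2 - 1$, should assemble into the $\frac{n}{2r^2}(g-2)$ term.

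The main obstacle, such as it is, is the bookkeeping in tracking the $\delta$ and $\delta^2$ terms so that the cross term $2\delta\cot g\theta\csc g\theta$ precisely cancels the $\delta\cos g\theta\csc^2 g\theta$ contribution and the surviving $\csc^2 g\theta$ coefficient collapses to $g(1-\delta^2)$ rather than something messier. Once identity~\eqref{eqn:AH=d} is in hand, the simpler formula~\eqref{eqn:AH=} follows immediately by specializing: when all multiplicities coincide we have $\delta = 0$, so $1-\delta^2 = 1$ and the right side of~\eqref{eqn:AH=d} becomes $\frac{n}{2r^2}(g\csc^2 g\theta + g - 2)$. However, to obtain~\eqref{eqn:AH=} one uses the coefficient $\frac{g}{n}$ rather than $\frac{g}{2n}$, so I would instead redo the combination with coefficient $\frac{g}{n}$ directly in the $\delta=0$ case; there the $\csc^2 g\theta$ terms cancel completely (since $\|A^S\|^2$ contributes $\frac{ng}{r^2}\csc^2 g\theta$ and $\frac{g}{n}\|H^S\|^2$ contributes $\frac{g}{n}\cdot\frac{n^2}{r^2}\cot^2 g\theta = \frac{gn}{r^2}(\csc^2 g\theta - 1)$), and the leftover is $-\frac{n}{r^2} + \frac{gn}{r^2} = \frac{n(g-1)}{r^2}$, as claimed.
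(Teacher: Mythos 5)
Your proposal is correct and follows essentially the same route as the paper: both substitute the formulas for $\|A^S(x)\|^2$ from Lemma~\ref{lem:norm2} and $H^S(x)$ from equation~\eqref{eqn:HS2}, simplify via elementary trigonometric identities (your expansion with $\cot^2 g\theta = \csc^2 g\theta - 1$ is equivalent to the paper's factoring $(\cot g\theta + \delta\csc g\theta)^2 = (\cos g\theta + \delta)^2\csc^2 g\theta$), and then verify the second identity directly in the $\delta=0$ case rather than deducing it from the first. No gaps; the cancellation of the cross term you highlight is exactly what makes the computation close.
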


\begin{proof}
By Lemma \ref{lem:norm2} and equation \eqref{eqn:HS2}, we have
\begin{eqnarray*}
 && \frac{r^2}{n} \left\{ \|A^S(x)\|^2 - \frac{g}{2n} \| H^S(x)\|^2 \right\} \\
 &=&   g (1+ \delta \cos g \theta) \csc^2 g \theta - 1  - \frac{g}{2} (\delta + \cos g \theta)^2 \csc^2 g \theta \\
 &=&    \frac{g}{2} (2 -\delta^2 -\cos^2 g \theta ) \csc^2 g \theta - 1 \\
 &=&    \frac{g}{2} (1 -\delta^2 + \sin^2 g \theta ) \csc^2 g \theta - 1 \\
 &=&    \frac{g}{2} (1 -\delta^2) \csc^2 g \theta + \frac{g}{2}- 1.
\end{eqnarray*}
This proves equation \eqref{eqn:AH=d}.

If all multiplicities are the same, then $\delta=0$.
By Lemma \ref{lem:norm2},
\[  \| A^S(x))\|^2 =  \frac{n}{r^2} (g \cot^2 g \theta+g -1) \]
for $x=r e^{i \theta} \in C$.
This implies equation \eqref{eqn:AH=}  since in this case
\[   \cot^2 g \theta = \frac{r^2}{n^2} \| H^S(x))\|^2 \]
by equation \eqref{eqn:HS2}.
\end{proof}

\begin{cor}
If $y(t)=e^{i \theta(t)} \in C$ is a solution to the spherical MCF with $\theta(0) = \theta_0 \neq \theta_{\rm min}$, then
\begin{eqnarray} \label{eqn:AH<}
\|A^S(y(t))\|^2 & < & \frac{g(1+\delta)}{n (\cos g \theta_0 + \delta)^2} \, e^{-2gnt} \, \| H^S(y(t))\|^2.
\end{eqnarray}
\end{cor}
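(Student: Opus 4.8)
The plan is to reduce the stated inequality \eqref{eqn:AH<} to an elementary pointwise estimate by substituting the closed-form expressions already at hand. Since $y(t)=e^{i\theta(t)}$ lies on the unit circle, I first specialize Lemma~\ref{lem:norm2} and equation~\eqref{eqn:HS2} to $r=1$. Writing $\theta=\theta(t)$ for brevity, this gives $\|A^S(y(t))\|^2 = ng\csc^2 g\theta\,(1+\delta\cos g\theta)-n$, and, using $\cot g\theta+\delta\csc g\theta=(\cos g\theta+\delta)\csc g\theta$, it gives $\|H^S(y(t))\|^2 = n^2(\cos g\theta+\delta)^2\csc^2 g\theta$.

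The crucial input is Proposition~\ref{cor:SMCF2}, namely \eqref{al}, which I rewrite as $\cos g\theta(t)+\delta = e^{gnt}(\cos g\theta_0+\delta)$. Substituting this into the expression for $\|H^S(y(t))\|^2$ yields $\|H^S(y(t))\|^2 = n^2 e^{2gnt}(\cos g\theta_0+\delta)^2\csc^2 g\theta(t)$. The hypothesis $\theta_0\neq\theta_{\rm min}$ guarantees, through \eqref{eqn:min}, that $\cos g\theta_0+\delta\neq 0$, so the coefficient on the right-hand side of \eqref{eqn:AH<} is well defined. Plugging this into the right-hand side, the factors $e^{2gnt}$ and $e^{-2gnt}$ cancel, as do the two factors $(\cos g\theta_0+\delta)^2$, leaving simply $ng(1+\delta)\csc^2 g\theta(t)$.

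It therefore remains to show $ng\csc^2 g\theta\,(1+\delta\cos g\theta)-n < ng(1+\delta)\csc^2 g\theta$. Dividing by $n$ and grouping the $\csc^2 g\theta$ terms, this collapses to $g\delta\csc^2 g\theta\,(\cos g\theta-1) < 1$. Since $\theta\in(0,\pi/g)$ forces $g\theta\in(0,\pi)$ and hence $\cos g\theta<1$, while $\delta\geq 0$, $g\geq 1$, and $\csc^2 g\theta>0$, the left-hand side is nonpositive, and the strict inequality holds at once (strictness coming from $\cos g\theta<1$, or from $0<1$ when $\delta=0$).

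In this problem there is no substantive obstacle beyond careful bookkeeping: the entire content is the exact cancellation engineered by the exponential growth law \eqref{al}. The one point deserving attention is to record that, after cancellation, the time dependence of the right-hand side enters only through $\csc^2 g\theta(t)$, so the bound reduces to a $t$-independent trigonometric inequality on the chamber; the sign of $\cos g\theta-1$ then finishes the argument.
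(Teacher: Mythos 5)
Your proof is correct, and every step checks out: the specializations of Lemma~\ref{lem:norm2} and equation~\eqref{eqn:HS2} to $r=1$, the rewriting $\cot g\theta+\delta\csc g\theta=(\cos g\theta+\delta)\csc g\theta$, the use of \eqref{al} in the form $\cos g\theta(t)+\delta=e^{gnt}(\cos g\theta_0+\delta)$, and the final reduction to $g\delta\csc^2 g\theta\,(\cos g\theta-1)<1$ are all valid; the hypothesis $\theta_0\neq\theta_{\rm min}$ is correctly invoked to ensure $\cos g\theta_0+\delta\neq 0$. Your route differs from the paper's in organization. The paper first derives the identity \eqref{eqn:AH=d} for $\|A^S\|^2-\frac{g}{2n}\|H^S\|^2$, divides by $\|H^S\|^2$, and bounds the resulting ratio by a chain of two estimates --- replacing $(1-\frac{2}{g})\sin^2 g\theta$ by $\sin^2 g\theta$ and $\delta\cos g\theta$ by $\delta$ --- applying Proposition~\ref{cor:SMCF2} only at the very last step; to obtain the strict inequality it must then argue separately that equality in that chain would force $\theta(t)\in\{0,\pi/g\}$, which never occurs along the flow. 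You instead substitute the flow law into the target inequality first, cancel the exponentials and the factor $(\cos g\theta_0+\delta)^2$, and are left with a $t$-independent pointwise inequality whose left-hand side is nonpositive, so strictness is automatic (nonpositive $<1$) and no equality-case discussion is needed; you also bypass the intermediate identity \eqref{eqn:AH=d} entirely. The paper's route has the side benefit of producing an exact formula for the ratio $\|A^S\|^2/\|H^S\|^2$, which is of independent interest in that section, while yours is the more economical and transparent path to this particular corollary.
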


\begin{proof}
If the isoparametric submanifold passing through $x$ is not minimal in the sphere, then
by equation \eqref{eqn:AH=d},
\begin{eqnarray*}
 \frac{  \|A^S(y(t))\|^2}{\| H^S(y(t))\|^2}
 &=&  \frac{g}{2n}  + \frac{n}{2} \,\, \frac{g (1-\delta^2) \csc^2 g \theta(t) + g-2}{\| H^S(y(t))\|^2}
\end{eqnarray*}
By equation \eqref{eqn:HS2}, we have
\begin{eqnarray}
\frac{2n}{g} \,\,\, \frac{  \|A^S(y(t))\|^2}{  \| H^S(y(t))\|^2}
 &= &  1  + \frac{(1-\delta^2) \csc^2 g \theta(t) + 1-\frac{2}{g}}{(\cot g \theta(t)+ \delta \csc g \theta(t))^2}   \nonumber \\
 &= &  1  + \frac{(1-\delta^2)  + (1-\frac{2}{g})\sin^2 g \theta(t)}{(\cos g \theta(t)+ \delta )^2}   \nonumber \\
 &\leq &  1  + \frac{(1-\delta^2)  + \sin^2 g \theta(t)}{(\cos g \theta(t)+ \delta )^2}   \nonumber \\
 &= &   \frac{2 + 2 \delta \cos g \theta(t)}{(\cos g \theta(t)+ \delta )^2}   \nonumber \\
 &\leq &   \frac{2(1 + \delta)}{(\cos g \theta(t)+ \delta )^2}   \nonumber \\
 &= &   \frac{2(1 + \delta)}{(\cos g \theta_0+ \delta )^2} \,\,\, e^{-2gnt}.  \label{eqn:AH<=}
\end{eqnarray}
The last equality follows from Proposition \ref{cor:SMCF2}. Note that in the above estimates,
equality holds only if $\theta(t)=0$ or $\pi/g$, in which cases the corresponding ispoparametric hypersurface
collapses to a lower dimensional focal submanifold and therefore never occur for all $t$ where the MCF exists.
Hence inequality \eqref{eqn:AH<=}  implies inequality \eqref{eqn:AH<}.
\end{proof}

\brem\label{prop:sharp2}
Note that $\cos g \theta_0$ can be arbitrarily close to $1$ when $\theta_0$ tends to $0$.
So if $n > g$, we can easily choose $\theta_0$ such that
\[ \frac{g(1+\delta)}{n (\cos g \theta_0 + \delta)^2} < 1. \]
In these cases, inequality \eqref{eqn:AH<} implies
\begin{equation}
 \|A^S(t)\|^2  <   e^{-2gnt} \, \| H^S(t)\|^2
\end{equation}
for all $t$.
In particular, when $g=2$ and $n\geq 3$, we can choose $\o_0$ close to $0$ such that the ancient solutions
of the spherical MCF described in Example \ref{ex:torus} satisfy
\[ \|A^S(t)\|^2  <   e^{-4nt} \, \| H^S(t)\|^2 \]
for all $t$. Hence Condition \eqref{ai} in Huisken-Sinestrari's result mentioned in the introduction is sharp in the sense that
$B$ can not be replaced by a constant greater than or equal to $4n$.
\erem

In the rest part of this paper, we will study rigidity problems of ancient solutions of spherical MCF modeled
on isoparametric hypersurfaces. For simplicity, we will
use $A$ and $H$ denote the shape operator and the mean curvature of a hypersurface in $S^{n+1}$.
If $M_t$ is an ancient solution for the spherical MCF of hypersurfaces in the sphere,
we will use $A(t)$ and $H(t)$ to denote the shape operator and mean curvature of $M_t$.

\bthm\label{am}
 Let $M_t$ be an ancient MCF for isoparametric hypersurfaces in $S^{n+1}$ with $g$ distinct principal curvatures. Then
 \beq\label{an}
\lim_{t\to -\infty} ||A( t)||^2= (g-1)n.
\eeq
If $M_0$ is not minimal, then there is a positive constant $C_0$ only depending on $M_0$ such that
\begin{equation} \label{eqn:limitH}
\lim_{t\to -\infty} H^2(t) \,\, e^{-2gnt}= C_0.
\end{equation}
In fact $C_0$ is given by equation \eqref{eqn:A/Hconst}.
\ethm

\begin{proof}
It follows from \eqref{ak} and Corollary \ref{cor:normmin2}  that
$$\lim_{t\to -\infty} ||A(t)||^2= ||A(\o_\min)||^2=(g-1)n.$$
By equations \eqref{eqn:HS2} and  \eqref{al}, we have
 $$H^2(t)= \frac{n^2 (\cos g\o_0+\d)^2}{\sin^2g\o(t)}\, e^{2gnt},$$
where $\d$ is given by equation \eqref{eqn:delta}.
Since
$$\lim_{t\to -\infty} \o(t)= \o_{\min}, \hspace{20pt} {\rm and} \,\,\, \cos g\o_{\min}= -\d,$$
 we have
$$\lim_{t\to -\infty} \sin^2g\o(t)= 1-\d^2.$$
This implies equation \eqref{eqn:limitH} with
\begin{equation} \label{eqn:A/Hconst}
C_0=\frac{n^2(\cos g \theta_0 + \delta)^2}{(1-\delta^2)}.
\end{equation}
\end{proof}

\bcor\label{ar} For spherical MCF of isoparametric hypersurfaces, we have the following estimates:
\ben
\item For $g=1$, we have
$\frac{||A(t)||^2}{H^2(t)}\equiv \frac{1}{n}$.
\item For $g\geq 2$,  there exist $t_1>0$ and positive constants $c_1$ and $c_2$ such that
\beq\label{ao}
c_2e^{-2gnt} \leq \frac{||A(t)||^2}{H^2(t)}\leq c_1 e^{-2gnt}.
\eeq
for all $t< -t_1$.
\een
\ecor

\begin{proof}
Part (1) follows from equation \eqref{eqn:AH=}.
Part (2) follows from Theorem \ref{am} since
$$\lim_{t \rightarrow - \infty} \frac{||A(t)||^2}{H^2(t)} \,\, e^{2gnt}$$
is a positive constant when $g \geq 2$.
\end{proof}

The spherical MCF for isoparametric hypersurfaces with $g=1$ is the spherical cap solution.
Huisken-Sinestrari's Theorem is a rigidity result modeled on this example. Motivated by
condition \eqref{ai} in Huisken-Sinestrari's Theorem and the estimate in
Corollary \ref{ar} above, we would like to propose the following rigidity conjecture
modeled on spherical MCF of isoparametric hypersurfaces with $g\geq 2$ distinct principal curvatures.

\ms
\begin{conj}
\label{conj:A/H}
 Let $f(t,\cdot)$ be an ancient solution to the spherical MCF for smooth compact hypersurfaces in $S^{n+1}$,  $A(t,\cdot)$ and $H(t,\cdot)$ the shape operator and mean curvature of $f(t,\cdot)$. If $A(t,\cdot)$ and $H(t,\cdot)$ satisfy the inequality \eqref{ao}
for some $g\in \{2, 3, 4, 6\}$,  then $f(t,\cdot)$ is a spherical MCF for isoparametric hypersurfaces with $g$ distinct principal curvatures.
\end{conj}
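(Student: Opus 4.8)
The plan is to prove the conjecture by a backward blow-down analysis in three stages: show that the two-sided pinching \eqref{ao} forces the ancient flow to converge, as $t\to-\infty$, to a closed minimal hypersurface $M_\infty\subset S^{n+1}$ with constant squared second fundamental form $\|A\|^2\equiv (g-1)n$; identify $M_\infty$ as isoparametric with exactly $g$ distinct principal curvatures via the rigidity of such minimal hypersurfaces; and then use a backward uniqueness argument to conclude that $f(t,\cdot)$ coincides for all $t$ with the spherical MCF of the isoparametric family of $M_\infty$.

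First I would establish the backward asymptotics. For the isoparametric model, Theorem \ref{am} gives $\|A(t)\|^2\to (g-1)n$, which is precisely the value $\|A^S\|^2=(g-1)n$ of a minimal isoparametric hypersurface in the unit sphere computed in Corollary \ref{cor:normmin2}; the goal is to recover this behaviour from \eqref{ao} alone. The lower bound in \eqref{ao} rewrites as $H^2(t)\le c_2^{-1}e^{2gnt}\|A(t)\|^2$, so once a uniform backward curvature bound $\|A(t)\|^2\le\Lambda$ is in force one obtains $H(t)\to 0$ and the hypersurfaces subconverge to a closed minimal limit $M_\infty$. When all multiplicities are equal, the quantity $\|A\|^2-\frac{g}{n}H^2$ is the constant $(g-1)n$ on the model by \eqref{eqn:AH=}, which suggests tracking this (or the general $\delta$-corrected expression \eqref{eqn:AH=d}) as a pinching quantity controlled by the maximum principle. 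Securing the requisite curvature bound is the analytic heart of the step: unlike the $g=1$ case of Huisken--Sinestrari, the model hypersurfaces are not convex, so preserved convexity is unavailable, and the bound must instead be extracted from the evolution of the pinching quantity, supplemented by Bernstein-type estimates on the derivatives of $A$ to guarantee smooth, noncollapsed convergence.

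Granting a smooth minimal limit $M_\infty$ with constant $\|A\|^2=(g-1)n$, the second stage invokes the rigidity of minimal hypersurfaces: Chern's conjecture predicts that a closed minimal hypersurface in $S^{n+1}$ with constant $\|A\|^2$ is isoparametric, and the value $(g-1)n$ then pins down $g$ as the number of distinct principal curvatures. By part (2) of Theorem \ref{bo}, such an $M_\infty$ is the unique minimal member of its isoparametric family, and the spherical MCF of that family converges backward to $M_\infty$; a backward uniqueness argument for the flow, matching $f(t,\cdot)$ to this isoparametric MCF at $t=-\infty$, would then force the two to agree for all $t$ and complete the proof.

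The main obstacle is the second stage: identifying the minimal limit as isoparametric is exactly an instance of Chern's conjecture, which remains open in general, so any unconditional proof must either resolve the relevant case of that conjecture or bypass the static classification altogether by exploiting the dynamics of the flow. A secondary but still substantial difficulty lies in the first stage, since in the absence of convexity the a priori curvature and derivative estimates do not follow from the standard Huisken--Sinestrari machinery and must be engineered from the specific exponential rate $e^{-2gnt}$ in \eqref{ao}; the delicate point is to rule out loss of compactness or change of topology so that the backward limit is a genuine smooth closed minimal hypersurface to which the rigidity can be applied.
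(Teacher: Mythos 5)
You are attempting to prove Conjecture \ref{conj:A/H}, which the paper does \emph{not} prove: it is posed as an open problem, motivated only by the estimate in Corollary \ref{ar} for the isoparametric model and by the analogy with Huisken--Sinestrari's $g=1$ rigidity theorem. So there is no paper proof to compare against, and the only question is whether your outline closes the conjecture. It does not; each of your three stages has a genuine gap, and you partly acknowledge this yourself. In stage one, the hypothesis \eqref{ao} controls only the \emph{ratio} $\|A(t)\|^2/H^2(t)$ up to multiplicative constants; it yields no uniform bound on $\|A\|$, no noncollapsing, and no derivative estimates, so smooth backward subconvergence to a closed minimal hypersurface is not established. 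Even granting such a limit $M_\infty$, nothing in \eqref{ao} forces $\|A\|^2$ to be \emph{constant} on $M_\infty$, let alone equal to $(g-1)n$: Theorem \ref{am} and Corollary \ref{cor:normmin2} compute that value for the isoparametric model, which is exactly what you are trying to prove $f$ is, so invoking them here is circular.

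Stage two is an open case of the strong form of Chern's conjecture (a closed minimal hypersurface with constant $\|A\|^2$ is isoparametric); as the paper recalls, the known rigidity results cover only $g=1,2$ and $g=3$ with multiplicities $(1,1)$, and the paper's own remark runs in the opposite direction --- it hopes the flow conjecture might yield new tools for Chern's conjecture, so assuming Chern's conjecture makes your argument conditional at best. Stage three fails even granting the first two stages: the backward limit does not determine an ancient flow. By Theorem \ref{bo} and \eqref{ak}, \emph{every} non-minimal member of the isoparametric family through $M_\infty$ (a one-parameter family in $\theta_0$, collapsing forward to $M_+$ or $M_-$) converges to the same minimal hypersurface $M_\infty$ as $t\to-\infty$, and the stationary flow $M_\infty$ itself does too. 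Hence ``matching $f$ to the isoparametric MCF at $t=-\infty$'' is vacuous as an identification: what must be shown is that any ancient flow satisfying \eqref{ao} and asymptotic to $M_\infty$ \emph{lies in} that family, which is essentially the whole content of the conjecture, not a standard backward-uniqueness statement. In short, your proposal is a reasonable roadmap that isolates the difficulties, but it reduces the conjecture to other open problems rather than proving it.
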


To have a rigidity conjecture with condition similar to inequality \eqref{at},  we will consider the
norm square of the traceless part of the shape operator:
\beq\label{af}
\phi:= ||A-\frac{1}{n} H \I ||^2 = ||A||^2-\frac{1}{n} H^2,
\eeq
where $I$ is the identity operator. This quantity arises naturally in studying
gap theorem for hypersurfaces in the sphere with constant mean curvature (cf. \cite{AdC}).
We first give some estimates of $\phi(t)$ for the spherical MCF of isoparametric hypersurfaces.

\bprop\label{as} Consider the spherical MCF of isoparametric hypersurfaces with $g$ distinct principal curvatures and multiplicity data $m_1\leq m_2$. Let $\phi(t,\cdot)$ be defined by equation \eqref{af} and
$\d=\d(m_1, m_2)$ defined by equation \eqref{eqn:delta}.
\ben
\item If $\d=0$, then given any $0<\e<1$, there exists $c_0>0$ such that  if $\o_0\in (\frac{\pi}{2g}-c_0, \frac{\pi}{2g}+c_0)$, then
\beq\label{ap1}
(g-1)n\leq \phi(t,\cdot)\leq (g-1+\e)n.
\eeq
Note that in this case $\theta_{\rm min}= \frac{\pi}{2g}$.
\item If $\d>0$ (so $g=2$ or $4$), then given $0<\e<1$, there exists $c_0>0$ such that
\ben
\item if $\o_0\in (\o_{\min}-c_0, \o_{\min})$ then
\beq\label{ap2}
(g-1-\e)n\leq \phi(t,\cdot)\leq (g-1)n,
\eeq
\item if $\o_0\in (\o_{\min}, \o_{\min}+c_0)$ then
$(g-1)n\leq \phi(t, \cdot)\leq (g-1+\e)n$,
\een
where $\o_{\min}\in (0, \frac{\pi}{g})$ such that $\cos (g\o_{\min})= -\d$.
\een
\eprop

\begin{proof}
An immediate consequence of Theorem \ref{am}  is the following:
For $g\geq 2$,  given $1>\e>0$, there exists $t_0>0$ such that
\beq\label{ap}
(g-1-\e)n  \leq \phi(t) \leq  (g-1+\e) n
\eeq
for all $t< -t_0$. For $g=1$,  $\phi(t) \equiv 0$ by Corollary \ref{ar}.

Using equations \eqref{eqn:HS2} and \eqref{aq2} to compute directly, we have
\begin{equation} \label{eqn:phi}
\phi(t)= n \csc^2 g\o(t) (g-1-\d^2 + \d (g-2) \cos g\o(t)).
\end{equation}

If $\d=0$, then
\[ \phi(t)= (g-1)n \csc^2 g\o(t) \geq (g-1)n. \]
 Part (1) of this proposition then follows from equation \eqref{ap}.

If $\d>0$ and $g=2$, then we have $\phi(t)= n(1-\d^2) \csc^2(2\o(t))$. So part (2) of this proposition follows for $g=2$.

If $\d>0$ and $g=4$, then we have
$$\phi(t)= n (3-\d^2 + 2\d \cos 4\o(t)) \csc^2 4\o(t).$$
Write $\xi(t)= \cos 4\o(t)$, then $\phi(t)= f(\xi(t))n$, where
$$f(\xi)=\frac{3-\d^2 +2\d \xi}{1-\xi^2}.$$
Note that $f(-\d)= 3$ and $f'(-\d)<0$.
So there exists $\e>0$ such that $f$ is decreasing on the interval $(-\d -\e, -\d+\e)$. So part (2) follows for $g=4$.

\end{proof}

Based on the above estimates, we would like to propose the following rigidity conjecture:

\begin{conj}
\label{conj:ay}
 Let $f(t,\cdot)$ be an ancient solution to the spherical MCF for smooth compact hypersurfaces in $S^{n+1}$. If the shape operator and
 mean curvature of $f(t,\cdot)$ satisfy inequality \eqref{ap1} or inequality \eqref{ap2}
for some $g\in \{2, 3, 4, 6\}$,  then $f(t,\cdot)$ is a spherical MCF for isoparametric hypersurfaces with $g$ distinct principal curvatures.
\end{conj}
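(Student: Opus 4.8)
The plan is to adapt the rigidity scheme of Huisken-Sinestrari for the $g=1$ spherical cap to the pinching on $\phi=\|A\|^2-\frac{1}{n}H^2$ recorded in \eqref{ap1} and \eqref{ap2}. First I would write down the evolution equation for $\phi$ along the spherical MCF. Combining the standard evolution equations for $H$ and for the full second fundamental form with Simons' identity yields a reaction-diffusion equation for $\phi$ of the form $\partial_t\phi=\Delta\phi-2\,|\nabla A_0|^2+R$, where $A_0$ is the traceless part of the shape operator, the gradient term is nonnegative, and $R$ is a reaction term built from algebraic contractions of the second fundamental form. The aim of this step is to show that the two-sided bounds $(g-1\pm\varepsilon)n$ are preserved along the flow, so that the strong maximum principle can be applied on the boundary of the pinching region.

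Second, I would analyze the backward limit as $t\to-\infty$. Because the hypothesis keeps $\phi$ uniformly bounded away from both the umbilic value $0$ and the collapsing (focal) regime, one expects uniform curvature bounds that allow a subsequence of the suitably normalized flow to converge. In view of the monotonicity underlying Theorem \ref{am} and Proposition \ref{as}, the natural candidate for this limit is a minimal hypersurface whose second fundamental form satisfies $\|A\|^2=(g-1)n$, matching \eqref{an}. The remaining task would be to show that this limit is a minimal isoparametric hypersurface with exactly $g$ distinct principal curvatures, after which one integrates the flow forward using the explicit solutions of Propositions \ref{lem:MCF2} and \ref{cor:SMCF2} to identify $f(t,\cdot)$ itself.

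The hard part, and the reason this statement remains a conjecture, is precisely the identification of the backward limit as isoparametric. For $g=1$ the only minimal hypersurface with $\|A\|^2=0$ is a totally geodesic great sphere, so this step is automatic. For $g\geq2$, however, a minimal hypersurface in $S^{n+1}$ with $\|A\|^2=(g-1)n$ need not be isoparametric a priori; asserting that the value $(g-1)n$ forces isoparametricity is exactly a sharpened form of Chern's conjecture on the norm of the second fundamental form of minimal hypersurfaces in spheres. Thus the rigidity is genuinely entangled with Chern's conjecture: a complete proof would either have to settle the relevant case of Chern's conjecture or produce a purely dynamical, Liouville-type argument forcing the Codazzi and Hessian quantities of the ancient flow to vanish, thereby deducing isoparametricity directly from the pinching. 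A secondary technical point is that \eqref{ap1} and \eqref{ap2} describe approach to the minimal locus from opposite sides, corresponding to the $\delta=0$ and $\delta>0$ regimes of Proposition \ref{as}; the sign of the reaction term $R$ then differs between the two cases, so the maximum-principle step must be carried out separately for each.
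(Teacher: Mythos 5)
You are addressing Conjecture \ref{conj:ay}, which the paper does not prove: it is stated as an open problem, supported only by Proposition \ref{as} (the model isoparametric flows do satisfy \eqref{ap1} or \eqref{ap2}), by Theorem \ref{am}, and by the discussion tying its stationary case to Chern's conjecture, where the known cases are $g=1,2$ and $g=3$ with $m_1=m_2=1$. Your proposal is likewise not a proof, and you say so honestly; your diagnosis of the essential obstruction --- identifying the backward limit as an isoparametric hypersurface, which for $g\geq 2$ amounts to a sharpened form of Chern's conjecture, while for $g=1$ the analogue is known (the paper cites Lei--Xu--Zhao \cite{LXZ}) --- agrees with the paper's own discussion. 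On that level your assessment is sound and there is no paper proof to deviate from.

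There is, however, a concrete error in your first step that would sink the strategy even as a conditional argument. You read the hypothesis as the symmetric two-sided pinching $(g-1-\varepsilon)n \leq \phi \leq (g-1+\varepsilon)n$, which is inequality \eqref{ap}, and you aim your maximum-principle step at preserving those symmetric bounds. The conjecture deliberately assumes the one-sided inequalities \eqref{ap1} or \eqref{ap2}: the critical value $(g-1)n$ is an endpoint, not the center, of the admissible interval, so $\phi-(g-1)n$ has a sign. This is essential, not cosmetic. Remark \ref{rem:weakconj} points out that Otsuki's closed minimal hypersurfaces \cite{O} are stationary (hence ancient) solutions with two distinct, non-constant principal curvatures satisfying \eqref{ap} for small constants, yet they are not isoparametric; by the gap theorems reviewed in the paper their $\|A\|^2$ must take values on both sides of $n=(g-1)n$, so they satisfy \eqref{ap} but neither \eqref{ap1} nor \eqref{ap2}. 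Consequently, any argument that uses only the symmetric pinching is attempting to prove a false statement. A secondary point: preservation of the pinching is not what needs proving, since the hypothesis is assumed along the entire ancient solution; the missing content lies entirely in the backward-limit analysis (uniform estimates, convergence, and the isoparametric identification exploiting the sign of $\phi-(g-1)n$), which, as you correctly note, is where the problem becomes as hard as Chern's conjecture.
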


\begin{rem} \label{rem:weakconj}
Otsuki constructed in \cite{O} closed minimal hypersurfaces in $S^{n+1}$ with two distinct principal curvatures of multiplicities $1$ and $n-1$ respectively such that $n-c_0\leq ||A||^2\leq n+c_0$ for some $c_0$. This  indicates that inequalities \eqref{ap1} and \eqref{ap2} in Conjecture \ref{conj:ay} can not be replaced by inequality \eqref{ap}.
\end{rem}

\begin{rem} By equation \eqref{af}, $||A||^2 -\frac{1}{n} H^2 \geq 0$. Hence the analogue of
Conjecture \ref{conj:ay} for the $g=1$ case can be stated as follows: Let $f(t, \cdot)$ be an ancient MCF for compact hypersurfaces in $S^{n+1}$. If there exists $\e>0$ such that
\begin{equation} \label{eqn:g1conj2}
||A(t, \cdot)||^2 -\frac{1}{n} H(t, \cdot)^2 \leq n-\e
\end{equation}
for $t$ sufficiently negative, then $f(t, \cdot)$ is a shrinking spherical cap or an equator. In fact, for $n \geq 2$, this statement follows from the following result of Lei-Xu-Zhao (cf. Theorem 2 in \cite{LXZ}):
If
\begin{equation} \label{eqn:LXZ}
 \limsup_{t \rightarrow - \infty} \,\,
\max_{M_t} (\|A\|^2 - \kappa H^2) < n
\end{equation}
where $\kappa = \min \{ \frac{3}{n+2}, \frac{4(n-1)}{n(n+2)} \} $, then
$f(t, \cdot)$ is a shrinking spherical cap or an equator.
Note that $\kappa \geq \frac{1}{n}$ for $n \geq 2$. Hence inequality \eqref{eqn:g1conj2}
implies inequality \eqref{eqn:LXZ} if $n \geq 2$.
\end{rem}
 \ms

 Next we explain the relation between Conjecture \ref{conj:ay} and Chern's Conjecture on the norm of minimal hypersurfaces in spheres. Using results of J. Simon \cite{Si}, Chern-Do Carmo-Kobayashi proved in \cite{CDK} the following gap theorem: If $M^n$ is a compact minimal hypersurface in $S^{n+1}$ and $0\leq ||A(x)||^2\leq n$ for all $x \in M$, then either $M$ is an equator with $||A(x)||^2=0$ or is a Clifford torus with $||A(x)||^2=n$.

\ms
\ni {\bf Chern's conjecture:}
The set
\begin{eqnarray*}
\cs &:=& \{ S\,\n \,\, {\rm There \,\, exists \,\,  a \,\,  compact \,\, minimal \,\, hypersurface \,\,  } M \subset S^{n+1} \\
&& \hspace{20pt} {\rm \,\, with \,\, constant \,\,} ||A|| {\rm \,\, such \,\, that \,\,} ||A||^2 =S \}
\end{eqnarray*}
is discrete, where $A$ is the shape operator of $M$.

\ms
Below we give a brief review of some results concerning Chern's conjecture: Let $M$ be a minimal hypersurface in $S^{n+1}$, $A$ the shape operator of $M$, and $S(x):= ||A(x)||^2$ for $x \in M$.
\ben
\item Peng-Terng proved in \cite{PT1}, \cite{PT2} that
\ben
\item if $S(x)$ is a constant and $n\leq S\leq n+\frac{1}{12}n$, then $S=n$ and $M$ is a Clifford torus,
\item if $n\leq 5$ then there exists $0<c<1$ such that $n\leq S(x)\leq n+c n$ for all $x\in M$ implies $S(x)\equiv n$ and $M$ is a Clifford torus.
\een
\item Chang proved in \cite{Cha} that if $n=3$ and $S> 3$ is constant, then $M$ is isoparametric with three distinct principal curvatures and $S=6$ (this minimal hypersurface in $S^4$ is described in Example \ref{ex:g3n3}).
\item Ding and Xin proved in \cite{DX} that if
$n\leq S(x)\leq n+ \frac{n}{23}$, then $S(x)\equiv n$ and $M$ is a Clifford torus. Xu-Xu in \cite{XX} improved the constant $\frac{1}{23}$ to $\frac{1}{22}$.
\een
We refer readers to  Xu-Xu's paper \cite{XX} on a survey of Chern's conjecture.

Note that for the stationary spherical MCF of hypersurfaces in $S^{n+1}$, Conjecture \ref{conj:ay} can be stated as follows: Let $M^n$ be a minimal hypersurface of $S^{n+1}$, and $S(x)= ||A(x)||^2$. If $S(x)$ satisfies one of the following inequalities for some $0<\e<1$,
 \begin{align*}
 & (g-1)n\leq S(x)\leq (g-1+\e)n,\\
 & (g-1-\e)n\leq S(x)\leq (g-1)n,
 \end{align*}  for all $x\in M$, then $M$ is isoparametric with $g$ distinct principal curvatures. In particular, this implies that $(g-1)n$ is a discrete point of the set $\cs$. Known results concerning rigidity of the stationary case are for $g=1, 2$  and $g=3$ with $m_1=m_2=1$.
All these works used estimates obtained from elliptic equations for $\D \II$ and $\D \K \II$, where $\II$ is the second fundamental form. We wonder whether the flow (parabolic) method may provide new insights and techniques to prove Chern's conjecture.


\end{document}